\setlist[enumerate,1]{label={\upshape(\roman*)}}
\tikzstyle{fat}=[circle, draw=black, fill=black, inner sep=0pt, minimum width=10pt]
\tikzstyle{vertex}=[circle, draw, fill=black, inner sep=0pt, minimum width=6pt]
\tikzstyle{vert}=[circle, draw=black, fill=white, inner sep=0pt, minimum width=6pt]
\tikzstyle{pc}=[circle, draw=black, inner sep=1pt, minimum width=10pt, font=\tiny] 
\tikzstyle{nc}=[circle, draw=black, inner sep=1pt, minimum width=10pt, font=\tiny] 
\tikzstyle{pedge}=[draw,-]
\tikzstyle{wedge}=[draw,-,postaction={decorate}, decoration={markings,mark = at position
\tikzstyle{wmedge}=[draw,-,postaction={decorate}, decoration={markings,mark = at position
\tikzstyle{dpedge}=[draw,very thick]
\tikzstyle{dwedge}=[draw,very thick,postaction={decorate}, decoration={markings,mark = at position
\tikzstyle{wwedge}=[draw,-,postaction={decorate}, decoration={markings,mark = at position 0.5 with {\arrow{stealth} }, mark = at position 0.6 with {\arrow{stealth} } }]
\tikzstyle{wwedge2}=[draw,-,postaction={decorate}, decoration={markings,mark = at position 0.45 with {\arrow{stealth} }, mark = at position 0.65 with {\arrow{stealth} } }]
\tikzstyle{wwwedge}=[draw,-,postaction={decorate}, decoration={markings,mark = at position 0.65 with {\arrow{stealth} },mark = at position 0.45 with {\arrow{stealth} }, mark = at position 0.55 with {\arrow{stealth} } }]
\tikzstyle{wwwedge2}=[draw,-,postaction={decorate}, decoration={markings,mark = at position 0.7 with {\arrow{stealth} },mark = at position 0.4 with {\arrow{stealth} }, mark = at position 0.55 with {\arrow{stealth} } }]
\tikzstyle{wwwwedge}=[draw,-,postaction={decorate}, decoration={markings,mark = at position 0.7 with {\arrow{stealth} },mark = at position 0.4 with {\arrow{stealth} }, mark = at position 0.5 with {\arrow{stealth} }, mark = at position 0.6 with {\arrow{stealth} } }]
\tikzstyle{wwwnedge}=[draw,densely dashed,postaction={decorate}, decoration={markings,mark = at position 0.65 with {\arrow{stealth} },mark = at position 0.45 with {\arrow{stealth} }, mark = at position 0.55 with {\arrow{stealth} } }]
\tikzstyle{wwnedge}=[draw,densely dashed,postaction={decorate}, decoration={markings,mark = at position 0.5 with {\arrow{stealth} }, mark = at position 0.6 with {\arrow{stealth} } }]
\tikzstyle{wnedge}=[draw,densely dashed,postaction={decorate}, decoration={markings,mark = at position 0.55 with {\arrow{stealth} } }]
\tikzstyle{wnedge2}=[draw,densely dashed,postaction={decorate}, decoration={markings,mark = at position 0.6 with {\arrow{stealth} } }]
\tikzstyle{dnedge}=[draw,densely dashed,postaction={decorate}]
\tikzstyle{nedge}=[draw,densely dashed]
\tikzstyle{weight2}= [draw=white, fill=white, font=\scriptsize]
\tikzstyle{weight}= [font=\scriptsize]
\tikzstyle{empty}=[circle, draw=white, inner sep=2pt, fill=white, minimum width=4pt]
\tikzstyle{ghost}=[circle, draw=black, inner sep=1pt, style=densely dashed, minimum width=6pt, font=\tiny]
\tikzstyle{ghostc}=[circle, draw=black, inner sep=1pt, style=densely dashed, minimum width=10pt, font=\tiny]
\tikzstyle{dedge}=[draw,very thick,dotted]
\newtheorem{theorem}{Theorem}
\newtheorem{lemma}[theorem]{Lemma}
\newtheorem{proposition}[theorem]{Proposition}
\theoremstyle{definition}
\newtheorem{dfn}[theorem]{Definition}
\newcommand{\Z}{\mathbb{Z}}
\newcommand{\C}{\mathbb{C}}
\newcommand{\cH}{\mathcal{H}}
\newcommand{\cS}{\mathcal{S}}
\title[Hermitian spectral radius]{Maximal digraphs whose Hermitian spectral radius is at most $2$}
\author[A. L. Gavrilyuk]{Alexander L. Gavrilyuk}
\address{Interdisciplinary Faculty of Science and Engineering\\ Shimane University}
\email{gavrilyuk@riko.shimane-u.ac.jp}
\author[A. Munemasa]{Akihiro Munemasa}
\address{Graduate School of Information Sciences\\
Tohoku University}
\email{munemasa@math.is.tohoku.ac.jp}
\date{\today}
\begin{document}
\keywords{digraph, root system, Dynkin diagram,
Hermitian matrix, eigenvalue, integral lattice}
\subjclass[2010]{05C50,05C22,15A18,15B57}

\begin{abstract}
We classify maximal digraphs whose Hermitian spectral radius is at most $2$.
\end{abstract}
\maketitle

\section{Introduction}

Smith \cite{Smith} and Lemmens and Seidel \cite{LS} showed that a connected simple graph
whose $(0,1)$-adjacency matrix has spectral radius at most 2
is a subgraph of one of the following graphs:

\begin{figure}[h]
\begin{center}
\begin{tikzpicture}[auto, scale=0.7] 
 \begin{scope}
  \foreach \type/\pos/\name in {
   {vertex/(0,-0.5)/a0}, {vertex/(1,-0.5)/a1}, {empty/(2,-0.5)/a2}, {empty/(3,-0.5)/a3},
   {vertex/(4,-0.5)/a4}, {vertex/(5,-0.5)/a5}, {vertex/(2.5,1)/b}}
   \node[\type] (\name) at \pos {};
  \foreach \edgetype\source/\dest in {
   {pedge/a0/a1}, {pedge/a1/a2}, {dedge/a2/a3}, {pedge/a3/a4}, {pedge/a4/a5},
   {pedge/a0/b}, {pedge/a5/b}}
  \path[\edgetype] (\source) -- (\dest);
 \end{scope}
\end{tikzpicture}
\hspace{1cm}
\begin{tikzpicture}[auto, scale=0.7] 
 \begin{scope}
  \foreach \type/\pos/\name in {
   {vertex/(0,0)/a0}, {vertex/(1,0)/a1}, {empty/(2,0)/a2}, {empty/(3,0)/a3},
   {vertex/(4,0)/a4}, {vertex/(5,0)/a5},
   {vertex/(-0.5,0.7)/b1},{vertex/(-0.5,-0.7)/b0},
   {vertex/(5.5,0.7)/c1},{vertex/(5.5,-0.7)/c0}}
   \node[\type] (\name) at \pos {};
  \foreach \edgetype\source/\dest in {
   {pedge/a0/a1}, {pedge/a1/a2}, {dedge/a2/a3}, {pedge/a3/a4}, {pedge/a4/a5},
   {pedge/a0/b1}, {pedge/a0/b0}, {pedge/a5/c1}, {pedge/a5/c0}}
  \path[\edgetype] (\source) -- (\dest);
 \end{scope}
\end{tikzpicture}
\end{center}
\begin{center}
\begin{tikzpicture}[auto, scale=0.7] 
 \begin{scope}
  \foreach \pos/\name in {
   {(0,0)/a0}, {(1,0)/a1}, {(2,0)/a2}, {(3,0)/a3}, {(4,0)/a4},
   {(2,1)/b1}, {(2,2)/b2}}
   \node[vertex] (\name) at \pos {};
  \foreach \source/\dest in {
   {a0/a1}, {a1/a2}, {a2/a3}, {a3/a4}, {a2/b1}, {b1/b2}}
  \path[pedge] (\source) -- (\dest);
 \end{scope}
\end{tikzpicture}
\hspace{0.3cm}
\begin{tikzpicture}[auto, scale=0.7] 
 \begin{scope}
  \foreach \pos/\name in {
   {(0,0)/a0}, {(1,0)/a1}, {(2,0)/a2}, {(3,0)/a3}, {(4,0)/a4}, {(5,0)/a5}, {(6,0)/a6},
   {(3,1)/b1}}
   \node[vertex] (\name) at \pos {};
  \foreach \source/\dest in {
   {a0/a1}, {a1/a2}, {a2/a3}, {a3/a4}, {a4/a5}, {a5/a6}, {a3/b1}}
  \path[pedge] (\source) -- (\dest);
 \end{scope}
\end{tikzpicture}
\hspace{0.3cm}
\begin{tikzpicture}[auto, scale=0.7] 
 \begin{scope}
  \foreach \pos/\name in {
   {(0,0)/a0}, {(1,0)/a1}, {(2,0)/a2}, {(3,0)/a3}, {(4,0)/a4}, {(5,0)/a5}, {(6,0)/a6}, {(7,0)/a7},
   {(2,1)/b1}}
   \node[vertex] (\name) at \pos {};
  \foreach \source/\dest in {
   {a0/a1}, {a1/a2}, {a2/a3}, {a3/a4}, {a4/a5}, {a5/a6}, {a6/a7}, {a2/b1}}
  \path[pedge] (\source) -- (\dest);
 \end{scope}
\end{tikzpicture}
\end{center}
\caption{Extended Dynkin diagrams of types $\mathsf{A}$, $\mathsf{D}$, $\mathsf{E}$}
\label{fig:Dynkin}
\end{figure}

As is well known, these are extended Dynkin diagrams of the irreducible root lattices of types $\mathsf{A}$,
$\mathsf{D}$, and $\mathsf{E}$.
Indeed, if $A$ is the adjacency matrix of a graph with
spectral radius at most $2$, then
$-A+2I$
is a positive semidefinite matrix,
which thus can be seen as the Gram matrix of a set $\Sigma$ of vectors in $\mathbb{R}^n$
such that $({\bf x},{\bf x})=2$ and $({\bf x},{\bf y})\in \{0,-1\}$ for all ${\bf x}\ne {\bf y}\in \Sigma$.
Therefore, $\Sigma$ is contained in a fundamental root system of a root lattice \cite{CGSS}.

The aim of the present work is to generalize this result to the class of digraphs.
A {\bf digraph} (a {\bf directed} or {\bf mixed} graph) $\Delta$ consists
of a finite set $V$ of vertices together with a subset $E \subseteq  V \times V$
of ordered pairs of distinct elements of $V$.
If $(x,y)\in E$ and $(y,x)\notin E$, then
we call $(x,y)$ an {\bf arc} or {\bf directed edge}.
If both $(x,y)\in E$ and $(y,x)\in E$, then the pair
$\{x,y\}$ is said to form a {\bf digon} of $\Delta$.

The {\bf Hermitian adjacency matrix} $H=H(\Delta)$ of $\Delta$ was independently
defined by
Liu and Li \cite{LL}, and Guo and Mohar \cite{GM}
as a Hermitian matrix $H\in \mathbb{C}^{V\times V}$
with entries given by
\begin{align*}
(H)_{xy}=\begin{cases}
1 & \text{ if }(x,y)\in E,(y,x)\in E,\\
i & \text{ if } (x,y)\in E,(y,x)\notin E,\\
-i & \text{ if }(x,y)\notin E,(y,x)\in E,\\
0 & \text{ otherwise}.
\end{cases}
\end{align*}

In what follows, by the Hermitian
spectral radius, or simply, spectral radius
$\rho(\Delta)$ of a digraph
$\Delta$, we mean the spectral radius of
its Hermitian adjacency matrix $H(\Delta)$.
Guo and Mohar \cite{GM2} studied digraphs
whose spectral radius is less than $2$.
In this paper, we give a classification of digraphs
whose spectral radius is at most $2$.

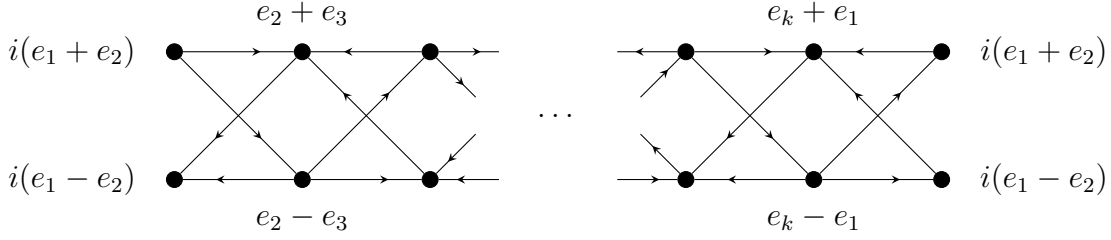
\begin{figure}
\begin{center}
\begin{tikzpicture}[auto, scale=1.7] 
 \begin{scope}
  \foreach \type/\pos/\name in {
   {vertex/(0,0)/a2}, {vertex/(0,1)/a1}, {vertex/(1,1)/b1}, {vertex/(1,0)/b2},
   {vertex/(2,0)/e2}, {vertex/(2,1)/e1}, {empty/(2.6,1)/b11}, {empty/(2.6,0)/b21},
   {empty/(2.4,0.6)/b12}, {empty/(2.4,0.4)/b22}, {empty/(3.4,1)/c11}, {empty/(3.4,0)/c21},
   {empty/(3.6,0.6)/c12}, {empty/(3.6,0.4)/c22}, {vertex/(4,1)/c1}, {vertex/(4,0)/c2},
   {vertex/(5,1)/d1}, {vertex/(5,0)/d2}, {vertex/(6,1)/f1}, {vertex/(6,0)/f2}}
   \node[\type] (\name) at \pos {};
  \foreach \pos/\name in {
   {(3,0.5)/\dots}, {(-0.8,1)/i(e_1+e_2)}, {(-0.8,0)/i(e_1-e_2)},
   {(1,1.3)/e_2+e_3}, {(1,-0.3)/e_2-e_3},
   {(5,1.3)/e_k+e_1}, {(5,-0.3)/e_k-e_1},
   {(6.8,1)/i(e_1+e_2)}, {(6.8,0)/i(e_1-e_2)}}
   \node at \pos {$\name$};
  \foreach \source/\dest in {
   b1/a2, a1/b1, a1/b2, b2/a2, b2/e1, e1/b1, e2/b1, b2/e2, b21/e2, e1/b11, e1/b12,
   b22/e2, c1/c11, c12/c1, c2/c22, c21/c2, d1/c2, c1/d1, c1/d2, d2/c2, f1/d1, d2/f2,
   d2/f1, f2/d1}
  \path[wedge] (\source) -- node[weight] {} (\dest);
 \end{scope}
\end{tikzpicture}
\end{center}
\caption{The digraph $\Delta^{(1)}_{2k}$, $k$ even}
\label{fig:Delta1e}
\end{figure}

\begin{figure}
\begin{center}
\begin{tikzpicture}[auto, scale=1.7] 
 \begin{scope}
  \foreach \type/\pos/\name in {
   {vertex/(0,0)/a2}, {vertex/(0,1)/a1}, {vertex/(1,1)/b1}, {vertex/(1,0)/b2},
   {vertex/(2,0)/e2}, {vertex/(2,1)/e1}, {empty/(2.6,1)/b11}, {empty/(2.6,0)/b21},
   {empty/(2.4,0.6)/b12}, {empty/(2.4,0.4)/b22}, {empty/(3.4,1)/c11}, {empty/(3.4,0)/c21},
   {empty/(3.6,0.6)/c12}, {empty/(3.6,0.4)/c22}, {vertex/(4,1)/c1}, {vertex/(4,0)/c2},
   {vertex/(5,1)/d1}, {vertex/(5,0)/d2}, {vertex/(6,1)/f1}, {vertex/(6,0)/f2}}
   \node[\type] (\name) at \pos {};
  \foreach \pos/\name in {
   {(3,0.5)/\dots}, {(-0.8,1)/i(e_1+e_2)}, {(-0.8,0)/i(e_1-e_2)}, {(1,1.3)/e_2+e_3},
   {(1,-0.3)/e_2-e_3}, {(5,1.3)/i(e_k+e_1)}, {(5,-0.3)/-i(e_k-e_1)}, {(6.8,1)/i(e_1+e_2)},
   {(6.8,0)/i(e_1-e_2)}}
   \node at \pos {$\name$};
  \foreach \edgetype/\source/ \dest in {
   wedge/b1/a2, wedge/a1/b1, wedge/a1/b2, wedge/b2/a2, wedge/b2/e1,
   wedge/e1/b1, wedge/e2/b1, wedge/b2/e2, wedge/b21/e2, wedge/e1/b11,
   wedge/e1/b12, wedge/b22/e2, wedge/c1/c11, wedge/c12/c1, wedge/c2/c22,
   wedge/c21/c2, wedge/d1/c2, wedge/c1/d1, wedge/c1/d2, wedge/d2/c2,
   pedge/f1/d1, pedge/d2/f2}
   \path[\edgetype] (\source) -- node[weight] {} (\dest);
 \end{scope}
 \begin{scope}
  \foreach \source/\dest in {d2/f1, f2/d1}
   \path[pedge] (\source) -- node[weight] {} (\dest);
 \end{scope}
\end{tikzpicture}
\end{center}
\caption{The digraph $\Delta^{(1)}_{2k}$, $k$ odd}
\label{fig:Delta1o}
\end{figure}

\begin{figure}
\begin{center}
\begin{tikzpicture}[auto, scale=1.7] 
 \begin{scope}
  \foreach \type/\pos/\name in {
   {vertex/(0,0)/a2}, {vertex/(0,1)/a1}, {vertex/(1,1)/b1}, {vertex/(1,0)/b2},
   {vertex/(2,0)/e2}, {vertex/(2,1)/e1}, {empty/(2.6,1)/b11}, {empty/(2.6,0)/b21},
   {empty/(2.4,0.6)/b12}, {empty/(2.4,0.4)/b22}, {empty/(3.4,1)/c11}, {empty/(3.4,0)/c21},
   {empty/(3.6,0.6)/c12}, {empty/(3.6,0.4)/c22}, {vertex/(4,1)/c1}, {vertex/(4,0)/c2},
   {vertex/(5,1)/d1}, {vertex/(5,0)/d2}, {vertex/(6,1)/f1}, {vertex/(6,0)/f2}}
   \node[\type] (\name) at \pos {};
  \foreach \pos/\name in {
   {(3,0.5)/\dots}, {(-0.8,1)/{i}(e_1+e_2)}, {(-0.8,0)/{i}(e_1-e_2)}, {(1,1.3)/e_2+e_3},
   {(1,-0.3)/e_2-e_3}, {(3.9,1.3)/ie_{k-1}+ie_k}, {(3.9,-0.3)/-ie_{k-1}+ie_k},
   {(5.1,1.3)/ie_k+e_1}, {(5.1,-0.3)/ie_k-e_1},	{(6.8,1)/{i}(e_1+e_2)}, {(6.8,0)/{i}(e_1-e_2)}}
   \node at \pos {$\name$};
  \foreach \edgetype/\source/\dest in {
   wedge/b1/a2, wedge/a1/b1, wedge/a1/b2, wedge/b2/a2, wedge/b2/e1, wedge/e1/b1,
   wedge/e2/b1, wedge/b2/e2, wedge/b21/e2, wedge/e1/b11, wedge/e1/b12, wedge/b22/e2,
   wedge/c1/c11, wedge/c12/c1, wedge/c22/c2, wedge/c2/c21, pedge/c2/d1, pedge/d1/c1,
   pedge/d2/c1, pedge/c2/d2, wedge/f1/d1, wedge/d2/f2}
   \path[\edgetype] (\source) -- node[weight] {} (\dest);
 \end{scope}
 \begin{scope}
  \foreach \source/ \dest/\weight in {d2/f1, f2/d1}
   \path[wedge] (\source) -- node[weight] {} (\dest);
 \end{scope}
\end{tikzpicture}
\end{center}
\caption{The digraph $\Delta^{(i)}_{2k}$ with $k$ even}
\label{fig:Deltaie}
\end{figure}

\begin{figure}
\begin{center}
\begin{tikzpicture}[auto, scale=1.7] 
 \begin{scope}
  \foreach \type/\pos/\name in {
   {vertex/(0,0)/a2}, {vertex/(0,1)/a1}, {vertex/(1,1)/b1}, {vertex/(1,0)/b2},
   {vertex/(2,0)/e2}, {vertex/(2,1)/e1}, {empty/(2.6,1)/b11}, {empty/(2.6,0)/b21},
   {empty/(2.4,0.6)/b12}, {empty/(2.4,0.4)/b22}, {empty/(3.4,1)/c11}, {empty/(3.4,0)/c21},
   {empty/(3.6,0.6)/c12}, {empty/(3.6,0.4)/c22}, {vertex/(4,1)/c1}, {vertex/(4,0)/c2},
   {vertex/(5,1)/d1}, {vertex/(5,0)/d2}, {vertex/(6,1)/f1}, {vertex/(6,0)/f2}}
   \node[\type] (\name) at \pos {};
  \foreach \pos/\name in {
   {(3,0.5)/\dots}, {(-0.8,1)/{i}(e_1+e_2)}, {(-0.8,0)/{i}(e_1-e_2)}, {(1,1.3)/e_2+e_3},
   {(1,-0.3)/e_2-e_3}, {(5,1.3)/ie_k+e_1}, {(5,-0.3)/ie_k-e_1}, {(6.8,1)/{i}(e_1+e_2)},
   {(6.8,0)/{i}(e_1-e_2)}}
   \node at \pos {$\name$};
  \foreach \source/\dest in {
   b1/a2, a1/b1, a1/b2, b2/a2, b2/e1, e1/b1, e2/b1, b2/e2, b21/e2, e1/b11, e1/b12,
   b22/e2, c11/c1, c1/c12, c22/c2, c2/c21, c2/d1, d1/c1, d2/c1, c2/d2, f1/d1, d2/f2}
   \path[wedge] (\source) -- node[weight] {} (\dest);
 \end{scope}
 \begin{scope}
  \foreach \source/\dest in {d2/f1, f2/d1}
   \path[wedge] (\source) -- node[weight] {} (\dest);
 \end{scope}
\end{tikzpicture}
\end{center}
\caption{The digraph $\Delta^{(i)}_{2k}$ with $k$ odd}
\label{fig:Deltaio}
\end{figure}

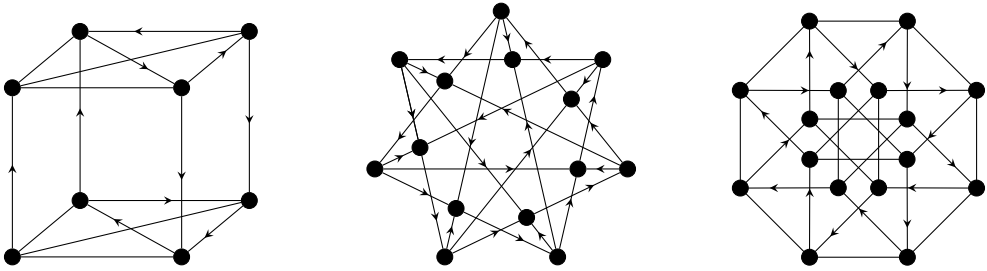
\begin{figure}
\begin{center}
\begin{tikzpicture}[auto, scale=1.5] 
 \def\XthreeDadj{0.6}
 \def\YthreeDadj{0.5}
 \begin{scope}	
  \foreach \pos/\name in {
    {(0,0)/a}, {(0,1.5)/b}, {(1.5,0)/c}, {(1.5,1.5)/d},
    {(0 + \XthreeDadj,0 + \YthreeDadj)/e},
    {(0 + \XthreeDadj,1.5 + \YthreeDadj)/f},
    {(1.5 + \XthreeDadj,0 + \YthreeDadj)/g},
    {(1.5 + \XthreeDadj,1.5 + \YthreeDadj)/h}}
   \node[vertex] (\name) at \pos {};
  \foreach \edgetype/\source/\dest in {
   wmedge/a/b, pedge/a/c, pedge/a/g, pedge/b/h, pedge/e/a,
   wedge/g/c, wmedge/d/c, pedge/b/f, pedge/b/d, wmedge/e/f,
   wmedge/e/g, wmedge/h/g, wedge/h/f, wedge/d/h, wedge/f/d,
   wedge/c/e}
  \path[\edgetype] (\source) -- node[weight] {} (\dest);
 \end{scope}
\end{tikzpicture}
\hspace{1cm}
\begin{tikzpicture} 
 \begin{scope}[auto, scale=1.5]
  \foreach \pos/\name in {
   {(0,0)/a}, {(1,0)/b}, {(1.62,0.78)/c}, {(1.4,1.75)/d},
   {(0.5,2.18)/e}, {(-0.4,1.75)/f}, {(-0.62,0.78)/g},
   {(0.1,0.43)/t}, {(0.725,0.35)/u}, {(1.18,0.78)/v},
   {(1.122,1.4)/w}, {(0.6,1.75)/x}, {(0,1.56)/y}, {(-0.22,0.97)/z}}
   \node[vertex] (\name) at \pos {};
  \foreach \source/\dest in {
   a/t, e/t, z/a, f/z, a/w, d/w, a/u,
   t/b, b/u, f/u, b/x, e/x, b/v, u/c,
   c/v, g/v, c/y, f/y, c/w, v/d, d/z,
   g/z, d/x, w/e, e/y, x/f, f/z, y/g, g/t}
   \path[wedge] (\source) -- (\dest);
\end{scope}
\end{tikzpicture}
\hspace{1cm}
\begin{tikzpicture} 
 \newdimen\rad
 \rad=1.7cm
 \newdimen\radi
 \radi=0.7cm
 \foreach \x in {1,2,3,4,5,6,7,8}
  {
	\draw (22.5+\x*45:\radi) node[vertex] {};
	\draw[pedge] (22.5+\x*45:\radi) -- (157.5+\x*45:\radi);
  }
 \foreach \x in {1,2,3,4}
  {
   	\draw (-22.5+\x*90:\rad) node[vertex] {};
	\draw[pedge] (-22.5+\x*90:\rad)  -- (22.5+\x*90:\rad);
	\draw[wedge] ( 22.5+\x*90:\radi) -- (-22.5+\x*90:\rad);
	\draw[wedge] (-22.5+\x*90:\rad)  -- (-67.5+\x*90:\radi);
   }
 \foreach \x in {112.5,202.5,292.5,382.5}
  {
   	\draw (\x:\rad) node[vertex] {};
	\draw[pedge] (\x:\rad) -- (\x+45:\rad);
	\draw[wedge] (\x+45:\radi) -- (\x:\rad);
	\draw[wedge] (\x:\rad) -- (\x-45:\radi);
   }
\end{tikzpicture}
\end{center}
\caption{$\Delta_8^\dagger$, $\Delta_{14}$, $\Delta_{16}$}
\label{fig:Delta8dagger}
\end{figure}

\begin{figure}
\begin{center}
\begin{tikzpicture}
 \begin{scope}[auto, scale=0.7]
  \foreach \pos/\name in {
   (0:2)/a0, (45:2)/a1, (90:2)/a2, (135:2)/a3,
   (180:2)/a4, (225:2)/a5, (270:2)/a6, (315:2)/a7}
   \node[vertex] (\name) at \pos {};
  \foreach \source/\dest in {
   a0/a1, a1/a2, a2/a3, a3/a4, a4/a5, a5/a6, a6/a7, a7/a0}
   \path[wedge] (\source) -- (\dest);
 \end{scope}
\end{tikzpicture}
\quad
\begin{tikzpicture}
 \begin{scope}[auto, scale=0.7]
  \foreach \pos/\name in {
   (0:2)/a0, (45:2)/a1, (90:2)/a2, (135:2)/a3,
   (180:2)/a4, (225:2)/a5, (270:2)/a6, (315:2)/a7}
   \node[vertex] (\name) at \pos {};
  \foreach \edgetype\source/\dest in {
   wedge/a0/a1, wedge/a1/a2, wedge/a2/a3, wedge/a3/a4,
   wedge/a4/a5, wedge/a5/a6, wedge/a6/a7, dwedge/a0/a7}
   \path[\edgetype] (\source) -- (\dest);
 \end{scope}
\end{tikzpicture}
\quad
\begin{tikzpicture}
 \begin{scope}[auto, scale=0.7]
  \foreach \pos/\name in {
   (0:2)/a0, (45:2)/a1, (90:2)/a2, (135:2)/a3,
   (180:2)/a4, (225:2)/a5, (270:2)/a6, (315:2)/a7}
   \node[vertex] (\name) at \pos {};
  \foreach \edgetype/\source/\dest in {
   dpedge/a0/a1, wedge/a1/a2, wedge/a2/a3, wedge/a3/a4,
   wedge/a4/a5, wedge/a5/a6, wedge/a6/a7, wedge/a7/a0}
   \path[\edgetype] (\source) -- (\dest);
 \end{scope}
\end{tikzpicture}
\quad
\begin{tikzpicture}
 \begin{scope}[auto, scale=0.7]
  \foreach \pos/\name in {
   (0:2)/a0, (45:2)/a1, (90:2)/a2, (135:2)/a3,
   (180:2)/a4, (225:2)/a5, (270:2)/a6, (315:2)/a7}
   \node[vertex] (\name) at \pos {};
  \foreach \edgetype/\source/\dest in {
   dpedge/a0/a1, wedge/a1/a2, wedge/a2/a3, wedge/a3/a4,
   wedge/a4/a5, wedge/a5/a6, wedge/a6/a7, dwedge/a0/a7}
   \path[\edgetype] (\source) -- (\dest);
 \end{scope}
\end{tikzpicture}
\end{center}
\caption{$D_{n},\tilde{C}_n,\tilde{C}'_n,\tilde{C}''_n$
for $n=8$}
\label{fig:DCCC}
\end{figure}
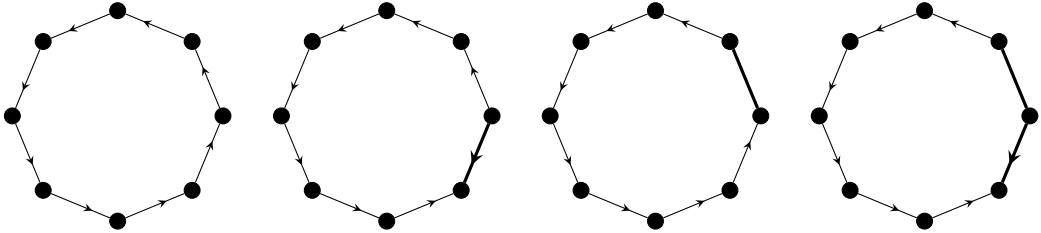

In order to state our theorem, we need to define a number
of graphs.
The graphs $\Delta_{2k}^{(1)}$,
$\Delta_{2k}^{(i)}$,
$\Delta_{8}^\dagger,\Delta_{14}$ or $\Delta_{16}$
are depicted in
Fig.~\ref{fig:Delta1e}--\ref{fig:Delta8dagger}.
The labels attached to the vertices in
Fig.~\ref{fig:Delta1e}--\ref{fig:Deltaio} will be explained in
Section~\ref{sec:3}.
We denote by $P_n$ the path graph on $n$ vertices, and by $C_n$
the (undirected) cycle graph on $n$ vertices.
We denote by $D_n$ the directed cycle on $n$ vertices.
Let  $\tilde{C}_n$ be the digraph obtained from $D_n$ by reversing the direction
of one of the arcs.
Let  $\tilde{C}'_n$ be the digraph obtained from $D_n$ by replacing
one of the arcs by a digon.
Let  $\tilde{C}''_n$ be the digraph obtained from $D_n$ by
taking two consecutive arcs and then replacing the first one by a digon
and reversing the direction of the second.
See Figure~\ref{fig:DCCC} for illustrations of these four graphs.
For positive integers $a,b,c$, let $Y_{a,b,c}$ be the tree obtained by
taking paths
$P_{a+1},P_{b+1},P_{c+1}$
and identifying an end vertex of each into a single vertex.
Note that $Y_{a,b,c}$ has $a + b + c + 1$ vertices.
For nonnegative integers $a_1,a_2,a_3,a_4$, let
$\square_{a_1,a_2,a_3,a_4}$
be a digraph obtained from $\tilde{C}_4$
with (consecutive) vertices $v_1,v_2,v_3,v_4$ by
adding directed paths on $a_i+1$ vertices
that are attached to $v_i$ for each $i=1,2,3,4$.
The digraphs
$\tilde{U}_1$ and $\tilde{U}_6$ can be found in
Fig.~\ref{fig:U1U6}.
The bipartite signed graphs
$U_1,\dots,U_{11}$ are depicted in
Fig.~\ref{fig:F-U}.

By a subdigraph of a digraph $\Delta=(V,E)$, we mean
a digraph of the form $(W,E\cap(W\times W))$, where $W$ is
a subset of $V$. It is clear that the Hermitian adjacency
matrix of a subdigraph of $\Delta$ is a principal submatrix
of $H(\Delta)$.
See Definition~\ref{dfn:canonical} for a definition
of canonical digraphs of a bipartite signed graph.
Our main result is the following theorem.
Since switching equivalence (see Definition~\ref{dfn:sw})
preserves Hermitian spectra of digraphs,
we give our classification
of connected digraphs whose Hermitian
spectral radius is at most $2$ up to switching equivalence.

\begin{theorem}\label{thm:main}
Let $\Delta$ be a  connected digraph. If $\rho(\Delta)\leq2$,
then $\Delta$ is switching equivalent to a subdigraph of one of the following:
\begin{enumerate}
\item\label{i:t1}
$\Delta_{2k}^{(1)}$,
\item\label{i:t2}
$\Delta_{2k}^{(i)}$,
\item\label{i:sp1}
$\Delta_{8}^\dagger,\Delta_{14}$ or $\Delta_{16}$.
\end{enumerate}
Moreover, if $\rho(\Delta)<2$,
then $\Delta$ is switching equivalent to a subdigraph of one of the following:
\begin{enumerate}
\setcounter{enumi}{3}
\item\label{i:Dn} $D_n$, where $n\not\equiv0\pmod{4}$,
\item\label{i:Cn} $\tilde{C}_n$, where $n\not\equiv2\pmod{4}$,
\item\label{i:Cn1} $\tilde{C}'_n$, where $n\not\equiv1\pmod{4}$,
\item\label{i:Cn2} $\tilde{C}''_n$, where $n\not\equiv3\pmod{4}$,
\item\label{i:path} a path $P_n$,
\item\label{i:sq} $\square_{a,0,c,0}$ for some $a,c\geq0$,
\item\label{i:Y} $Y_{a,1,1}$ for some $a\geq1$,
\item\label{i:U1} $\tilde{U}_1$,
\item\label{i:U6} $\tilde{U}_6$,
\item\label{i:canonical}
canonical digraphs of the bipartite signed graphs
$U_1,\dots,U_{11}$.
\end{enumerate}
Conversely, every digraph in \ref{i:t1}--\ref{i:sp1}
has spectral radius $2$, and every digraph in
\ref{i:Dn}--\ref{i:canonical}
has  spectral radius less than $2$.
\end{theorem}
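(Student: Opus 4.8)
The plan is to linearize the Hermitian problem over $\mathbb{R}$ and reduce it to the classification of signed graphs of bounded spectral radius equipped with an extra order-four symmetry. Write $H=H(\Delta)=S+iT$, where $S=\operatorname{Re}H$ is symmetric and $\{0,1\}$-valued (recording digons) and $T=\operatorname{Im}H$ is antisymmetric and $\{0,\pm1\}$-valued (recording arcs together with their directions), and form the real symmetric $2|V|\times2|V|$ matrix
\[
\hat H=\begin{pmatrix}S&-T\\ T&S\end{pmatrix},
\]
the realification of $H$ as an $\mathbb{R}$-linear operator. A direct check shows that the eigenvalues of $\hat H$ are precisely those of $H$, each with doubled multiplicity, so $\rho(\hat H)=\rho(\Delta)$; that $\hat H$ has zero diagonal and entries in $\{0,\pm1\}$, hence is the signed adjacency matrix of a signed graph $\widehat\Delta$ on $2|V|$ vertices; and that $\hat H$ commutes with $J=\left(\begin{smallmatrix}0&-I\\ I&0\end{smallmatrix}\right)$, the realification of multiplication by $i$, which satisfies $J^2=-I$ and acts on $\widehat\Delta$ as a fixed-point-free automorphism of order four. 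Conversely $\Delta$ together with its digon/arc data is recovered from the pair $(\widehat\Delta,J)$, and switching equivalence of $\Delta$ (Definition~\ref{dfn:sw}) corresponds to $J$-equivariant switching of $\widehat\Delta$. Thus I would classify the pairs $(\widehat\Delta,J)$ in place of $\Delta$.

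Since $\rho(\Delta)\le2\iff 2I-\hat H\succeq0$, realizing $2I-\hat H$ as a Gram matrix produces a set $\Sigma$ of vectors of norm $2$ with pairwise inner products in $\{0,\pm1\}$; by \cite{CGSS} these are roots and $\Sigma$ generates a root lattice, an orthogonal sum of types $\mathsf{A},\mathsf{D},\mathsf{E}$ when $\rho<2$ (so that $2I-\hat H\succ0$ and $\Sigma$ is linearly independent) and of affine type when $\rho=2$ (so that $2I-\hat H$ is singular, its radical carrying the affine relation). The operator $J$ becomes a fixed-point-free isometry of order four with $J^2=-\mathrm{id}$ preserving $\Sigma$. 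The problem is therefore reduced to enumerating connected $J$-stable root configurations up to $J$-equivariant symmetry, and reading off the digraph from each. I would treat the finite case $\rho<2$ first, then the boundary case $\rho=2$.

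The finite-type enumeration should return exactly items \ref{i:Dn}--\ref{i:canonical}: the cyclic families $D_n,\tilde C_n,\tilde C'_n,\tilde C''_n$ (from $J$-symmetric even cycles of $\widehat\Delta$), the paths $P_n$, the square family $\square_{a,0,c,0}$ and the trees $Y_{a,1,1}$, the digraphs $\tilde U_1,\tilde U_6$, and the canonical digraphs of $U_1,\dots,U_{11}$ — the last being precisely the digon-free case $S=0$, where $\hat H=\left(\begin{smallmatrix}0&-T\\ T&0\end{smallmatrix}\right)$ is itself bipartite, so that $\widehat\Delta$ is a bipartite signed graph. The residues modulo $4$ encode the holonomy of $J$ around each cycle: the product of the edge units taken around a cycle is a power of $i$, and whether $\rho$ attains $2$ or stays strictly below is dictated by this holonomy together with the cycle length. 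The boundary case yields the maximal diagrams \ref{i:t1}--\ref{i:sp1}, for which the vector labels $e_j,\,ie_j$ in Figures~\ref{fig:Delta1e}--\ref{fig:Deltaio} exhibit the Gaussian embedding and make the norm-$2$ and inner-product conditions transparent. For the converse I would compute spectra directly: for each of $\Delta_{2k}^{(1)},\Delta_{2k}^{(i)},\Delta_8^\dagger,\Delta_{14},\Delta_{16}$ exhibit a null vector of $2I-H$ to certify $\rho=2$, with $2I+H\succeq0$ excluding anything larger; and for the finite families verify $2I-\hat H\succ0$, the cyclic cases reducing to (rank-one--perturbed) circulants whose $2\cos,2\sin$ eigenvalues reach $\pm2$ exactly in the excluded residue classes.

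The real labor, and the step I expect to be the main obstacle, is the finite enumeration: beyond the infinite cyclic, path, square and $Y$ families one must rule out every other connected $J$-symmetric root configuration and isolate the eleven sporadic bipartite signed graphs $U_1,\dots,U_{11}$, the digraphs $\tilde U_1,\tilde U_6$, and the three sporadic maximal digraphs $\Delta_8^\dagger,\Delta_{14},\Delta_{16}$. Controlling the order-four holonomy on cycles — so that precisely the stated residue classes modulo $4$ survive — and discharging the exceptional $\mathsf{E}$-type configurations is where the difficulty concentrates; the root-lattice framework organizes the search, but the case analysis must be carried out by hand.
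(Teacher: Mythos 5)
Your linearization $\hat H=\left(\begin{smallmatrix}S&-T\\ T&S\end{smallmatrix}\right)$ is, up to a block permutation, exactly the paper's associated signed graph $\cS(\Delta)$ (Lemma~\ref{lem:SDspec}), and your observation that $\Delta$ can be recovered from the pair $(\widehat\Delta,J)$ is sound; so the setup agrees with the paper's. The gap is that everything after the setup is a plan rather than a proof. The entire content of the theorem is the enumeration you defer to ``the case analysis must be carried out by hand'': the classification of the connected $J$-stable root configurations, in both the definite and the semidefinite case. The paper does not perform this enumeration from scratch; it imports two existing classification theorems --- Greaves' classification of maximal indecomposable cyclotomic matrices over the Gaussian integers (Theorem~\ref{thm:Gary}) for $\rho\le 2$, and McKee--Smyth's classification of integer symmetric matrices with all eigenvalues in $(-2,2)$ (Theorem~\ref{thm:McS}) for $\rho<2$. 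Without invoking these results (or reproving them), your outline establishes none of the items \ref{i:t1}--\ref{i:canonical}; it only asserts that the enumeration ``should return exactly'' them, which is the conclusion, not an argument.

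Second, even granting those classifications, there is a genuinely nontrivial bridging step that your equivariant framework names but does not resolve. Both imported results are stated up to an equivalence coarser than switching equivalence of digraphs: Greaves' equivalence allows $H\mapsto -H$, and a matrix in his list need not be the Hermitian adjacency matrix of any digraph at all (entries $-1$ are forbidden in such matrices); likewise, non-switching-equivalent digraphs can have equivalent, even isomorphic, associated signed graphs --- the paper's example is $C_3$ versus $D_3$, both with associated signed graph $O_6$. Closing this gap is where the paper's actual work lies: Propositions~\ref{prop:T} and \ref{prop:S} show each maximal cyclotomic matrix is strongly equivalent to its negative and to the Hermitian adjacency matrix of an honest digraph, and Lemmas~\ref{lem:1} and \ref{lem:O2k}--\ref{lem:UDC} determine, according to whether $\cS(\Delta)$ is connected or splits into two components, which switching classes of digraphs lie over a given signed graph. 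In your language this is precisely the passage from classifying $\widehat\Delta$ up to switching to classifying pairs $(\widehat\Delta,J)$ up to $J$-equivariant switching; your proposal identifies this problem but supplies no mechanism for solving it, and it is exactly this refinement that produces, for instance, the digraph $\Delta(U_7)$ missing from the earlier literature.
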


There are a number of proper subdigraphs of $\Delta_{2k}^{(1)}$ and
$\Delta_{2k}^{(i)}$ having  spectral radius $2$.
A partial classification of maximal digraphs with spectral radius
at most $2$ has been obtained in \cite{Y} under the assumption
that the underlying graph is $C_4$-free.

Note that the  spectral radius is
monotone with respect to taking subdigraphs: if $\Delta'$
is a subdigraph of $\Delta$,
then $\rho(\Delta')\leq \rho(\Delta)$.
This reduces the problem to consideration of a number of forbidden subdigraphs,
and this led to the classification in \cite{GM2}.

We shall exploit another approach in this paper.
Note that symmetric (Hermitian) matrices over a ring of algebraic integers
having all their eigenvalues in the interval $[-2, 2]$ are called {\bf cyclotomic},
and are of independent interest in algebraic number theory.
Integer cyclotomic matrices were described by McKee and Smyth \cite{MS},
whose proof also involved the classical root systems.
Their result was further extended by Greaves \cite{G} to Hermitian cyclotomic matrices
over the Eisenstein and Gaussian integers.

Since a Hermitian adjacency matrix is a Hermitian matrix over the Gaussian integers,
the results of Greaves \cite{G} contain, however, do not immediately imply those of Guo and Mohar \cite{GM2}
due to the following obstacles.
First of all, the classification in \cite{G} is given
up to equivalence,
which is weaker than switching equivalence
for digraphs (see Section~\ref{sec:pre}).
Secondly, the classification in \cite{G} does not explicitly list matrices
with all their eigenvalues in the {\it open} interval $(-2, 2)$.

We proceed as follows. We first show that each of $\mathbb{Z}[i]$-graphs, corresponding
to maximal indecomposable Hermitian cyclotomic matrices
determined in \cite{G},
gives rise to a unique switching equivalence class of digraphs.
This implies that
every digraph with  spectral radius at most $2$
is switching equivalent to a subdigraph of one of the digraphs in
Theorem~\ref{thm:main} \ref{i:t1}--\ref{i:sp1}
This classification contains digraphs with
 spectral radius less than $2$
classified by Guo and Mohar \cite{GM2}, so in principle,
one can try to derive their result by looking at
subdigraphs of digraphs with  spectral radius $2$.
Instead, we show that such a classification follows from the results of McKee and Smyth \cite{MS}
via the notion of the associated signed graph
of a digraph, which we introduce in Section~\ref{sec:pre}.
In doing so, we found a counterexample to the statement of \cite[Lemma~4.8(b)]{GM2},
leading to an omission in \cite[Theorem 4.15]{GM2}.
We thus complete the statement of \cite[Theorem 4.15]{GM2}
by supplying the missing digraph which is the canonical
digraph of $U_7$ (see Fig.~\ref{fig:CU7})
in our Theorem~\ref{thm:main} \ref{i:canonical}.

After giving preliminaries in Section~\ref{sec:pre},
we give a proof of the first part of Theorem~\ref{thm:main}
in Section~\ref{sec:3}. The second part of Theorem~\ref{thm:main}
is proved in Section~\ref{sec:4}.
Finally, in Section~\ref{sec:cr}, we
establish a correspondence
between the digraphs with  spectral radius at most $2$
and the Gaussian root lattices. We also
characterize digraphs with smallest eigenvalue greater than $-\sqrt{2}$,
which strengthens \cite[Proposition~8.6]{GM}.

\section{Preliminaries}
\label{sec:pre}

\subsection{Equivalence relations on matrices}
\label{subsec:2.1}

In this subsection, we discuss equivalence relations defined
in \cite{G,M}. Let $n$ be a positive integer, and let
$\cH_n$ denote the set of all Hermitian matrices of order $n$
with entries in $\{0,\pm1,\pm i\}$, where $i=\sqrt{-1}$.
Let $U_n(\Z[i])$ be the
subgroup of the complex unitary group consisting of
those matrices which have entries in $\Z[i]$.
Then
$U_n(\Z[i])$ is generated by the permutation matrices
together with the
diagonal matrices with diagonal entries in $\{\pm1,\pm i\}$.
For two matrices $A,B\in\cH_n$, we say that $A$ is \textbf{strongly equivalent}
to $B$, written $A\approx B$,
 if $A=QBQ^*$ or $A=Q\overline{B}Q^*$ for some $Q\in U_n(\Z[i])$.
The matrices $A$ and $B$ are called \textbf{equivalent},
 written $A\sim B$, if $A$ is
strongly equivalent to
$B$ or $-B$.

\begin{dfn}\label{dfn:sw}
We say that a digraph $\Delta'$ is obtained from $\Delta$ by
\textbf{four-way switching} if
$H(\Delta')=QH(\Delta)Q^*$ for some $Q\in U_n(\Z[i])$.
The digraph whose Hermitian adjacency matrix is $\overline{H(\Delta)}$
is called the \textbf{converse} of a digraph $\Delta$.
We say that two digraphs
are \textbf{switching equivalent} (see \cite{M})
if one can be obtained from the other by a sequence of four-way switchings and operations of taking the converse.
\end{dfn}

We remark that the four-way switching
was defined in \cite{GM} by modifying the set of arcs
with respect to a certain partition of the vertex set.
Our definition is equivalent to the one in \cite{GM}.

\begin{lemma}\label{lem:4way}
Let $\Delta_1$ and $\Delta_2$ be digraphs with
respective Hermitian adjacency
matrices $H_1$ and $H_2$. Then the following statements are equivalent:
\begin{enumerate}
\item $\Delta_1$ and $\Delta_2$ are switching equivalent,
\item $H_1$ and $H_2$ are strongly equivalent.
\end{enumerate}
\end{lemma}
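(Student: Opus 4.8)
The plan is to translate the two digraph operations into matrix operations and then recognize switching equivalence as the equivalence relation on $\cH_n$ generated by them. Directly from the definitions, a four-way switching replaces the Hermitian adjacency matrix $H$ by $QHQ^*$ for some $Q\in U_n(\Z[i])$, and taking the converse replaces $H$ by $\overline{H}$. Hence $\Delta_1$ and $\Delta_2$ are switching equivalent precisely when $H_1$ and $H_2$ lie in the same class of the equivalence relation on $\cH_n$ generated by the maps $H\mapsto QHQ^*$ (for $Q\in U_n(\Z[i])$) and $H\mapsto\overline{H}$. The goal is to show that this generated relation coincides with $\approx$.

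For the implication (ii) $\Rightarrow$ (i), suppose $H_1\approx H_2$. If $H_1=QH_2Q^*$, then $\Delta_1$ is obtained from $\Delta_2$ by a single four-way switching. If instead $H_1=Q\overline{H_2}Q^*$, write $\Delta_2'$ for the converse of $\Delta_2$, so that $\overline{H_2}=H(\Delta_2')$; then $H_1=QH(\Delta_2')Q^*$ exhibits $\Delta_1$ as a four-way switching of $\Delta_2'$, and composing with the converse shows $\Delta_1$ and $\Delta_2$ are switching equivalent. Thus every single instance of $\approx$ is realized by at most one converse followed by one four-way switching.

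The core of the argument is the implication (i) $\Rightarrow$ (ii), for which it suffices to prove that $\approx$ is itself an equivalence relation containing both generating maps; being an equivalence relation that contains every generator, it must then contain the relation they generate. That $\approx$ contains $H\mapsto QHQ^*$ is immediate, and it contains $H\mapsto\overline{H}$ by taking $Q=I$ in the conjugate alternative of the definition. Reflexivity is clear. The substance is transitivity (together with symmetry), which I will obtain by checking that the set of transformations $\{H\mapsto QHQ^*\}\cup\{H\mapsto Q\overline{H}Q^*\}$ is closed under composition and inversion, hence forms a group acting on $\cH_n$ whose orbits are exactly the $\approx$-classes. The only computation here uses that $U_n(\Z[i])$ is stable under complex conjugation and transpose: for $Q\in U_n(\Z[i])$ one has $\overline{Q}\in U_n(\Z[i])$ and $Q^T=(\overline{Q})^*\in U_n(\Z[i])$. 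With these, composing two maps of the conjugate type gives
\[
H\mapsto Q_1\overline{Q_2\overline{H}Q_2^*}\,Q_1^*=(Q_1\overline{Q_2})\,H\,(Q_1\overline{Q_2})^*,
\]
a map of the first type, and the remaining compositions (first with first, first with conjugate) are verified in the same fashion.

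I expect the main, though modest, obstacle to be exactly this closure bookkeeping: one must track carefully how complex conjugation interacts with the adjoint, using $\overline{Q^*}=Q^T=(\overline{Q})^*$, to confirm that conjugating a strong equivalence again lands inside $U_n(\Z[i])$-conjugation. Once this is in place, transitivity of $\approx$ follows, and combining it with the easy direction (ii) $\Rightarrow$ (i) yields the stated equivalence of (i) and (ii).
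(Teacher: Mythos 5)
Your proposal is correct and follows the only natural route here: the paper itself dismisses this lemma with ``Immediate from the definitions,'' and your argument is precisely the unwinding of those definitions (switching equivalence as the relation generated by $H\mapsto QHQ^*$ and $H\mapsto\overline{H}$, and strong equivalence verified to be an equivalence relation containing these generators, via closure of $U_n(\Z[i])$ under conjugation and transpose). Your closure computations are accurate, so this is simply a fully detailed version of the paper's tacit proof.
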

\begin{proof}
Immediate from the definitions.
\end{proof}

\subsection{Associated signed graphs}
\label{subsec:2.2}

A \textbf{signed graph}
$S$ is a triple $(V, E^{+}, E^{-})$
of a set $V$ of vertices,
a set $E^{+}$ of $2$-subsets
of $V$ (called \textbf{positive} edges), and
a set $E^{-}$ of $2$-subsets
of $V$ (called  \textbf{negative} edges)
such that
$E^{+} \cap E^{-} = \emptyset$.
We depict positive (resp.\ negative) edges by solid (resp.\ dashed) lines
(see Fig.~\ref{fig:QO}, \ref{fig:F-U}).
The adjacency matrix of a signed graph $S=(V, E^{+}, E^{-})$
is the matrix $A(S)$ whose rows and columns are indexed by $V$
such that its $(x,y)$-entry is $1,-1,0$ according to
$\{x,y\}\in E^+,E^-$, otherwise, respectively.
We say two signed graphs $S,S'$ are \textbf{strongly equivalent}
(resp.\ \textbf{equivalent}) if $A(S)\approx A(S')$
(resp.\ $A(S)\sim A(S')$).
By a subgraph of a signed graph $S$, we mean
a signed graph of the form $(W,E^+_W,E^-_W)$, where $W$ is
a subset of $V$, and $E^\pm_W$ is the subset of $E^\pm$
consisting of those $2$-subsets that are contained in $W$.
It is clear that the adjacency
matrix of a subgraph of $S$ is a principal submatrix
of $A(S)$.

Given a connected digraph $\Delta$ with Hermitian adjacency matrix
$H=A+iB$, where $A$ is a symmetric $(0,1)$-matrix and $B$ is a
skew symmetric $(0,\pm1)$-matrix,
the \textbf{associated signed graph} $\cS(\Delta)$ of $\Delta$ is
the signed graph with adjacency matrix
\begin{equation}\label{1a}
C=\begin{bmatrix}
A&B\\ B^\top&A
\end{bmatrix}.
\end{equation}

\begin{lemma}\label{lem:SDspec}
For a digraph $\Delta$,
the Hermitian spectrum of $\Delta$ is the same as that of the $\cS(\Delta)$
in which multiplicities are doubled.
\end{lemma}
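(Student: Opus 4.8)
The plan is to exhibit the adjacency matrix $C$ of $\cS(\Delta)$ as being unitarily similar to the direct sum $H\oplus\overline{H}$, after which the doubling of the spectrum is immediate. The starting observation is that $B$ is skew symmetric, so $B^\top=-B$, and hence the matrix in \eqref{1a} has the form $C=\begin{bmatrix}A&B\\-B&A\end{bmatrix}$. This is exactly the real $2\times 2$ block pattern attached to the complex matrix $H=A+iB$ by complexification, and the whole point will be to decouple the $H$-part from the $\overline{H}$-part by a single change of basis.

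Concretely, I would introduce the $2n\times2n$ unitary matrix $U=\tfrac{1}{\sqrt2}\begin{bmatrix}I&I\\iI&-iI\end{bmatrix}$ (one checks $UU^{*}=I$ directly) and verify that $U^{*}CU=\begin{bmatrix}H&0\\0&\overline{H}\end{bmatrix}$. This is a routine block computation: multiplying out $CU$ and then $U^{*}(CU)$, the off-diagonal blocks collapse to $0$ because the combinations $iA-B=i(A+iB)$ and $-iA-B=-i(A-iB)$ align the two columns of $CU$ with $H$ and $\overline{H}$ respectively, while the normalizing factor $\tfrac12$ leaves the correct matrices on the diagonal. The essential ingredient making the off-diagonal blocks vanish is precisely the relation $B^\top=-B$, so the skew symmetry of $B$ is what is being used.

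Once the block diagonalization is in hand, the spectrum of $C$ is the union, counted with multiplicity, of the spectrum of $H$ and the spectrum of $\overline{H}$. Since $H$ is Hermitian its eigenvalues are real, and $\overline{H}=H^\top$ has the same characteristic polynomial as $H$; hence $H$ and $\overline{H}$ have identical spectra, and every eigenvalue of $H$ occurs in $C$ with exactly twice its multiplicity. As $C$ is the adjacency matrix of $\cS(\Delta)$, this is the assertion of the lemma.

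I do not expect a genuine obstacle: the content is the single correct choice of the diagonalizing unitary $U$, after which everything reduces to a finite block calculation together with the elementary fact that $H$ and $\overline{H}=H^\top$ are cospectral. If one wished to avoid writing $U$ at all, the same conclusion follows by lifting eigenvectors: if $Hv=\lambda v$ with $v=x+iy$, then $\begin{bmatrix}x\\-y\end{bmatrix}$ and $\begin{bmatrix}y\\x\end{bmatrix}$ are real eigenvectors of $C$ for the same real $\lambda$, and they are independent because $v$ and $iv$ are independent over $\mathbb{R}$; a dimension count over all eigenvalues then forces each multiplicity to be exactly doubled. Either route is short, so the only real decision is which presentation to adopt.
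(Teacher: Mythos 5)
Your proof is correct and is essentially the paper's argument: the paper conjugates $C$ by the invertible matrix $\left[\begin{smallmatrix} I & 0\\ -iI & I\end{smallmatrix}\right]$ to reach the block upper-triangular form $\left[\begin{smallmatrix} H & B\\ 0 & \overline{H}\end{smallmatrix}\right]$, while you conjugate by a unitary to reach the block-diagonal form $H\oplus\overline{H}$; both computations rest on $B^\top=-B$. In either case the conclusion is $\det(xI-C)=\det(xI-H)\det(xI-\overline{H})=(\det(xI-H))^2$, using that $\overline{H}=H^\top$ is cospectral with $H$.
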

\begin{proof}
Since $H$ is Hermitian, we have
\[\begin{bmatrix} I&0\\-iI&I\end{bmatrix}
\begin{bmatrix} A&B\\B^\top&A\end{bmatrix}
\begin{bmatrix} I&0\\iI&I\end{bmatrix}
=\begin{bmatrix} A+iB&B\\0&A-iB\end{bmatrix}.\]
This implies
\[\det(xI-C)=(\det(xI-H))^2,\]
and hence the eigenvalues of $C$ are the same as those of $H$, with
multiplicities doubled.
\end{proof}

We can characterize $\cS(\Delta)=(V_1\cup V_2,E^+\cup E^-)$ as follows.
The vertex set $V_1\cup V_2$
is the disjoint union of two copies $V_i=\{x_i\mid x\in V(\Delta)\}$ of
the vertex set $V(\Delta)$ of the digraph $\Delta$.
The edges of $\cS(\Delta)$ are the following:
\begin{enumerate}
\item If $\{x,y\}$ is a digon in $\Delta$, then
$\{x_1,y_1\},\{x_2,y_2\}\in E^+$;
\item If $(x,y)$ is an arc in $\Delta$, then
$\{x_1,y_2\}\in E^+$ and $\{x_2,y_1\}\in E^-$.
\end{enumerate}

For a digraph $\Delta$ (or a signed graph $S$),
let $G(\Delta)$ ($G(S)$, respectively) denote
the \textbf{underlying graph} of $\Delta$ (or $S$, respectively), i.e.,
a graph obtained from $\Delta$ (from $S$)
by replacing all of its arcs (signed edges, respectively) with undirected edges.
We say a digraph (or a signed graph) is connected if
its underlying graph is connected.
For a vertex $x$ of a graph $G$, we denote by
$\deg_G(x)$ the degree of $x$.

\begin{lemma}\label{lem:asg2}
With reference to the above description of $\cS(\Delta)$,
the following statements hold for all $x\in V(\Delta)$:
\begin{enumerate}
\item\label{sg2_1}
the vertices $x_1$ and $x_2$ are not adjacent in $G(\cS(\Delta))$,
\item\label{sg2_2}
the vertices $x_1$ and $x_2$ have no common neighbors in
$G(\cS(\Delta))$,
\item\label{sg2_3}
$\deg_{G(\cS(\Delta))}(x_1)=\deg_{G(\cS(\Delta))}(x_2)=\deg_{G(\Delta)}(x)$.
\end{enumerate}
\end{lemma}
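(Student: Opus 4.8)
The plan is to read all three statements directly off the block matrix $C$ of \eqref{1a} together with the combinatorial description of the edges of $\cS(\Delta)$ given just above it. Indexing the rows and columns of $C$ by $V_1$ followed by $V_2$, the $(x_1,y_1)$- and $(x_2,y_2)$-entries equal $A_{xy}$, the $(x_1,y_2)$-entry equals $B_{xy}$, and the $(x_2,y_1)$-entry equals $(B^\top)_{xy}=B_{yx}$. Consequently an edge of $\cS(\Delta)$ lying within a single copy $V_j$ records a nonzero entry of $A$, i.e.\ a digon of $\Delta$, whereas an edge joining $V_1$ to $V_2$ records a nonzero entry of $B$, i.e.\ an arc of $\Delta$. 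The one combinatorial fact I would invoke repeatedly is that, for a fixed pair $\{x,y\}$, the conditions ``$\{x,y\}$ is a digon'' and ``$x$ and $y$ are joined by an arc'' are mutually exclusive, so $A_{xy}$ and $B_{xy}$ are never simultaneously nonzero.

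Statements \ref{sg2_1} and \ref{sg2_3} then follow at once. For \ref{sg2_1}, an edge joining $x_1$ and $x_2$ would force $B_{xx}\ne0$; but $B$ is skew symmetric, so $B_{xx}=0$ (equivalently, $\Delta$ has no loop at $x$), and $x_1,x_2$ are nonadjacent. For \ref{sg2_3} I would count neighbours: the neighbours of $x_1$ in $V_1$ are the $y_1$ with $A_{xy}\ne0$ (the digons at $x$), and its neighbours in $V_2$ are the $y_2$ with $B_{xy}\ne0$ (the arcs incident with $x$); since every edge of $G(\Delta)$ at $x$ is of exactly one of these two types and the two types are disjoint, the total is $\deg_{G(\Delta)}(x)$. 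The same count, now using the entries $A_{xy}$ within $V_2$ and $B_{yx}$ between $V_2$ and $V_1$, gives $\deg_{G(\cS(\Delta))}(x_2)=\deg_{G(\Delta)}(x)$.

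For \ref{sg2_2}, I would take a putative common neighbour $z$ of $x_1$ and $x_2$ and split on its copy. If $z=y_1$, then adjacency to $x_1$ gives $A_{xy}\ne0$ while adjacency to $x_2$ gives $B_{yx}\ne0$; if $z=y_2$, then adjacency to $x_1$ gives $B_{xy}\ne0$ while adjacency to $x_2$ gives $A_{xy}\ne0$. In both cases $A_{xy}$ and $B_{xy}$ (recall $B_{yx}=-B_{xy}$) are simultaneously nonzero, contradicting the mutual exclusivity noted above, so no common neighbour exists. The verification is elementary, and there is no real obstacle beyond careful bookkeeping: one must track the block indexing of $C$ so that inter-copy edges are attributed to $B$ (arcs) and intra-copy edges to $A$ (digons), and one must use that a single pair of vertices cannot be both a digon and joined by an arc. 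With these two points fixed, all three assertions are immediate.
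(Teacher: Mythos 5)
Your proof is correct and follows exactly the route the paper has in mind: the paper dismisses this lemma as ``Immediate'' from the block form \eqref{1a} and the edge description of $\cS(\Delta)$, and your argument is simply a careful unpacking of that, with the one essential observation (that $A_{xy}$ and $B_{xy}$ are never simultaneously nonzero, since $H_{xy}\in\{0,\pm1,\pm i\}$) correctly identified and used. Nothing is missing; your write-up just makes explicit what the authors left to the reader.
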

\begin{proof}
Immediate.
\end{proof}

\begin{lemma}\label{lem:1}
The associated signed graph of a connected digraph $\Delta$
is either connected, or has two connected components, say $S_1$ and $S_2$.
The former case occurs precisely when there is a cycle in $\Delta$
containing an odd number of arcs.
In the latter case, $H(\Delta)$, $A(S_1)$ and $A(S_2)$ are strongly equivalent.
\end{lemma}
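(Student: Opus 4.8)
The plan is to exploit the fact that $G(\cS(\Delta))$ is a $2$-fold covering of $G(\Delta)$. Define $\pi\colon V_1\cup V_2\to V(\Delta)$ by $\pi(x_a)=x$. By the description of the edges of $\cS(\Delta)$, every edge of $G(\cS(\Delta))$ projects under $\pi$ to an edge of $G(\Delta)$, and traversing an edge of $G(\cS(\Delta))$ flips the index $a\in\{1,2\}$ if and only if the underlying edge of $\Delta$ is an arc, while digon edges preserve the index. Thus, attaching to each walk in $G(\Delta)$ the number of arcs it uses, reduced modulo $2$, records exactly how the index changes along any lift of that walk. Since $G(\Delta)$ is connected, every vertex $y_b$ is joined in $G(\cS(\Delta))$ to a fixed vertex $(v_0)_1$ by a lift of some walk $v_0\to y$, and both indices $b$ are reachable precisely when walks of both parities exist.

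First I would settle connectivity. If $\Delta$ has a cycle using an odd number of arcs, then, using connectedness of $G(\Delta)$, one forms closed walks at $v_0$ of either parity (travel to the cycle, around it, and back), so $(v_0)_1$ is joined to both $y_1$ and $y_2$ for every $y$; hence $G(\cS(\Delta))$ is connected. Conversely, if every cycle of $\Delta$ uses an even number of arcs, then the arc-parity of a walk $v_0\to y$ depends only on $y$, yielding a well-defined function $\epsilon\colon V(\Delta)\to\Z/2\Z$ with $\epsilon(v_0)=0$, $\epsilon(x)=\epsilon(y)$ across each digon, and $\epsilon(x)\neq\epsilon(y)$ across each arc. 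The component of $(v_0)_1$ is then $\{\,y_{1+\epsilon(y)}\mid y\in V(\Delta)\,\}$ and the component of $(v_0)_2$ is its complement, so $\cS(\Delta)$ has exactly two components $S_1,S_2$, each with $n=|V(\Delta)|$ vertices. Since $\epsilon$ exists exactly when no odd cycle is present, this proves the first two assertions.

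For the last assertion I would switch the imaginary entries out of $H=H(\Delta)$ using $\epsilon$. Let $D$ be the diagonal matrix with $D_{xx}=i^{\epsilon(x)}$; then $D\in U_n(\Z[i])$, so $H\approx DHD^*$. A short case check on the two edge types, using $\epsilon(x)=\epsilon(y)$ for a digon and $\epsilon(x)\neq\epsilon(y)$ for an arc, shows that each entry $(DHD^*)_{xy}=D_{xx}H_{xy}\overline{D_{yy}}$ lies in $\{0,\pm1\}$, so $DHD^*$ is the adjacency matrix $A(S')$ of a signed graph $S'$ on $V(\Delta)$. Moreover the bijection $y\mapsto y_{1+\epsilon(y)}$ carries $S'$ onto $S_1$, matching positive entries to $E^+$ and negative entries to $E^-$ in every case, whence $A(S')\approx A(S_1)$ by a permutation and $H\approx A(S_1)$. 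Replacing $\epsilon$ by the equally valid parity function $1-\epsilon$ produces $H\approx A(S_2)$ in the same way, and since strong equivalence is an equivalence relation, $H(\Delta)$, $A(S_1)$ and $A(S_2)$ are all strongly equivalent.

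The main obstacle is this last step: one must verify that conjugating by the fourth-root-of-unity diagonal $D$ removes every imaginary entry and reproduces the signs of $S_1$ exactly. The delicate point is that, although $\epsilon$ is only defined modulo $2$, the two relevant values $i^0=1$ and $i^1=i$ combine with the arc entries $\pm i$ so that each arc contributes $+1$ or $-1$ according to the direction in which $\epsilon$ increases, precisely reproducing the split $\{x_1,y_2\}\in E^+$, $\{x_2,y_1\}\in E^-$ built into $\cS(\Delta)$. Checking this correspondence for both orientations of an arc, and for digons, is the crux; the connectivity dichotomy, by contrast, is a routine covering argument once the index-flip rule has been recorded.
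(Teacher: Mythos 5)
Your proof is correct and takes essentially the same route as the paper's: your parity function $\epsilon$ encodes exactly the paper's splitting of the vertices into $V_1\cap S_1$ and its complement, and your conjugation by the diagonal matrix with entries $i^{\epsilon(x)}$ is precisely the strong equivalence that the paper exhibits in block form between $\begin{bmatrix}A_1&\pm B_1\\ \pm B_1^\top&A_2\end{bmatrix}$ and $H(\Delta)=\begin{bmatrix}A_1&iB_1\\ -iB_1^\top&A_2\end{bmatrix}$. The only difference is presentational: you spell out the double-cover/parity argument for the connectivity dichotomy, which the paper asserts in a single line, and you verify the sign-matching entrywise rather than reading it off the block structure.
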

\begin{proof}
Note that $\cS(\Delta)$
is connected if and only if there exists a vertex $x$ of $\Delta$ such that
$x_1$ and $x_2$ are connected by a path in $\cS(\Delta)$.
This condition
is equivalent to the existence of a cycle containing an odd number of arcs.

Suppose that $\cS(\Delta)$ is disconnected, and let
$S_1$ be a connected component of $\cS(\Delta)$.
Let $A_1,A_2$ and $B_1$ denote the submatrix of \eqref{1a} corresponding to
$(V_1\cap S_1)^2$, $(V_1\setminus S_1)^2$ and
$(V_1\cap S_1)\times (V_2\cap S_1)$, respectively.
Since $B=-B^\top$ in \eqref{1a},
the matrix \eqref{1a} has the form
\[\begin{bmatrix}
A_1&0&0&B_1\\
0&A_2&-B_1^\top&0\\
0&-B_1&A_1&0\\
B_1^\top&0&0&A_2\end{bmatrix}.\]
This implies that $S_1$
and the other connected component have adjacency matrix
\[\begin{bmatrix}A_1&\pm B_1\\ \pm B_1^\top&A_2\end{bmatrix}\]
which are strongly equivalent to the Hermitian adjacency matrix
\[\begin{bmatrix}A_1&iB_1\\ -iB_1^\top&A_2\end{bmatrix}\]
of $\Delta$.
\end{proof}

For example, the associated signed graph of $\Delta_8^\dagger$
(see Figure~\ref{fig:Delta8dagger}) is connected, since
$\Delta_8^\dagger$ contains a directed triangle.
The associated signed graphs of $\Delta_{14}$ and $\Delta_{16}$
(see Figure~\ref{fig:Delta8dagger}) have two connected components
since, after removing digons, the underlying graphs are
bipartite.

\begin{dfn}\label{dfn:canonical}
Let $S$ be a connected bipartite signed graph with
bipartition $V(S)=V_1\cup V_2$. Let $D$ be the diagonal
matrix whose rows and columns are indexed by $V(S)$, and
whose $(x,x)$-entry is $1$ or $i$ according to $x\in V_1$ or $V_2$.
Then $D^*A(S)D$ is the Hermitian adjacency matrix of a digraph
$\Delta$. We call $\Delta$ a \textbf{canonical digraph}
of the bipartite signed graph $S$.
\end{dfn}

As an example, a canonical digraph of the bipartite signed graph
$U_7$ in Fig.~\ref{fig:F-U} is shown in Fig.~\ref{fig:CU7}.
A canonical digraph is not uniquely determined, but it is unique
up to switching equivalence.

Since the underlying graph of a canonical digraph
$\Delta$ of a bipartite signed graph is bipartite,
Lemma~\ref{lem:1} shows that the associated signed graph
of $\Delta$ is disconnected.

\begin{figure}[h]
\begin{center}
\begin{tikzpicture}[auto, scale=1]
 \foreach \type/\pos/\name in {
  {vertex/(0,1)/90},{vertex/(0,-1)/270},
  {vertex/(0.86,0.5)/30}, {vertex/(0.86,-0.5)/-30},
  {vertex/(-0.86,0.5)/150}, {vertex/(-0.86,-0.5)/210},
  {vertex/(-1.86,0.5)/y+}, {vertex/(-1.86,-0.5)/y-}}
  \node[\type] (\name) at \pos {};
 \foreach \edgetype/\source/ \dest in {
  wedge/-30/30, wedge/90/30, wedge/90/150, wedge/150/210,
  wedge/210/270, wedge/-30/270, wedge/y+/150, wedge/y+/y-, wedge/210/y-}
  \path[\edgetype] (\source) -- (\dest);
\end{tikzpicture}
\end{center}
\caption{A canonical digraph of $U_7$}
\label{fig:CU7}
\end{figure}
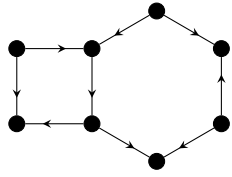

\begin{lemma}\label{lem:bipartite0}
Let $S$ be a connected bipartite signed graph.
For a digraph $\Delta$, $H(\Delta)$ is strongly equivalent to
$A(S)$ if and only if $\Delta$ is switching equivalent to a canonical digraph of $S$.
\end{lemma}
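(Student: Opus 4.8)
The plan is to reduce both implications to a single observation: the diagonal matrix $D$ of Definition~\ref{dfn:canonical} lies in $U_n(\Z[i])$, so passing from a connected bipartite signed graph to a canonical digraph is not merely an equivalence but a \emph{strong} equivalence, after which everything follows by chaining with Lemma~\ref{lem:4way}. Concretely, I would first fix one canonical digraph $\Delta_0$ of $S$, so that $H(\Delta_0)=D^*A(S)D$ as in Definition~\ref{dfn:canonical}. The diagonal entries of $D$ are $1$ and $i$, hence those of $D^*$ are $1$ and $-i$, all lying in $\{\pm1,\pm i\}$; therefore $D^*\in U_n(\Z[i])$. Setting $Q=D^*$ gives $Q^*=D$ and
\[
QA(S)Q^*=D^*A(S)D=H(\Delta_0),
\]
which shows $H(\Delta_0)\approx A(S)$. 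This is the entire content of the lemma: the diagonal rescaling that turns $A(S)$ into $H(\Delta_0)$ is realized by a matrix in $U_n(\Z[i])$, so it is a strong equivalence rather than the weaker equivalence $\sim$.

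Next I would record that $\approx$ is an equivalence relation on $\cH_n$. Reflexivity is clear, and symmetry and transitivity follow because $U_n(\Z[i])$ is closed under inverse, adjoint, and entrywise complex conjugation, using the identities $\overline{Q^*}=(\overline{Q})^*$ and $\overline{Q_1Q_2}=\overline{Q_1}\,\overline{Q_2}$ to handle the conjugate cases of the definition $A=QBQ^*$ or $A=Q\overline{B}Q^*$. With this in place both implications are immediate. For the ``if'' direction, suppose $\Delta$ is switching equivalent to the canonical digraph $\Delta_0$; Lemma~\ref{lem:4way} gives $H(\Delta)\approx H(\Delta_0)$, and combining with $H(\Delta_0)\approx A(S)$ and transitivity yields $H(\Delta)\approx A(S)$. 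For the ``only if'' direction, suppose $H(\Delta)\approx A(S)$; by symmetry $A(S)\approx H(\Delta_0)$, so transitivity gives $H(\Delta)\approx H(\Delta_0)$, and Lemma~\ref{lem:4way} then shows that $\Delta$ is switching equivalent to the canonical digraph $\Delta_0$ of $S$.

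There is no genuinely hard step here; the lemma is essentially a bookkeeping statement. The only point demanding a little care is the membership $D^*\in U_n(\Z[i])$, which is exactly what upgrades the obvious diagonal-rescaling equivalence to a strong equivalence, and the tacit verification that $D^*A(S)D$ really is a Hermitian adjacency matrix of a digraph, i.e.\ that it has zero diagonal and off-diagonal entries in $\{0,\pm i\}$. The latter relies on the bipartiteness of $S$ (so that $A(S)$ is block off-diagonal with respect to $V_1\cup V_2$, making $D^*A(S)D$ purely imaginary off the diagonal), but it is already asserted in Definition~\ref{dfn:canonical}, so I would simply invoke it rather than reprove it.
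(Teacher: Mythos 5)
Your proposal is correct, and the ``if'' direction coincides with the paper's own argument: the paper likewise observes that $H(\Delta_0)\approx A(S)$ ``by construction,'' which is exactly your point that $D^*\in U_n(\Z[i])$ upgrades the diagonal rescaling to a strong equivalence, and then chains with Lemma~\ref{lem:4way}. Where you genuinely diverge is the converse. You simply reuse the same fact $H(\Delta_0)\approx A(S)$ together with symmetry and transitivity of $\approx$ to get $H(\Delta)\approx H(\Delta_0)$, then apply Lemma~\ref{lem:4way}. The paper instead re-derives $H(\Delta_0)\approx A(S)$ by a detour through the associated signed graph: since $S$ is bipartite, every cycle of $\Delta_0$ has an even number of arcs, so by Lemma~\ref{lem:1} the graph $\cS(\Delta_0)$ splits into two components $S_1,S_2$, each strongly equivalent to $H(\Delta_0)$, and one checks $A(S)\approx A(S_i)$; the chain $H(\Delta_0)\approx A(S_1)\approx A(S)\approx H(\Delta)$ then finishes the proof. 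Your route is shorter and more self-contained (it needs no signed-graph machinery, only the equivalence-relation properties of $\approx$, which you rightly verify via closure of $U_n(\Z[i])$ under adjoint, transpose, and entrywise conjugation --- properties the paper itself uses tacitly when chaining strong equivalences); the paper's route has the side benefit of exhibiting the component structure of $\cS(\Delta_0)$, which is the tool actually exploited in the neighboring Lemma~\ref{lem:bipartite}. Your care about membership $D^*\in U_n(\Z[i])$ and about $D^*A(S)D$ having zero diagonal and entries in $\{0,\pm i\}$ (guaranteed by bipartiteness, and already asserted in Definition~\ref{dfn:canonical}) addresses the only delicate points, so there is no gap.
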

\begin{proof}
Let $\Delta_0$ be a canonical digraph of $S$.
If $\Delta$ is switching equivalent to $\Delta_0$,
then $H(\Delta)\approx H(\Delta_0)\approx A(S)$ by
Lemma~\ref{lem:4way} and construction.

Conversely, suppose $H(\Delta)\approx A(S)$.
Since $S$ is bipartite, every cycle in
a canonical digraph $\Delta_0$ of $S$ has an even number of arcs.
By Lemma~\ref{lem:1}, $\cS(\Delta_0)$
has two connected components, say $S_1$ and $S_2$. It is easy to see
that $A(S)\approx A(S_i)$ for $i=1,2$, and hence by Lemma~\ref{lem:1},
we have
$H(\Delta_0)\approx A(S_1)\approx A(S)\approx H(\Delta)$.
Thus, $\Delta$ is switching equivalent to $\Delta_0$ by
Lemma~\ref{lem:4way}.
\end{proof}

\begin{lemma}\label{lem:bipartite}
Let $U$  be a bipartite signed graph, and let $\Delta_0$ be a
canonical digraph of $U$.
Let $\Delta$ be a digraph such that $\cS(\Delta)$ is disconnected,
and $\cS(\Delta)$ has a connected component which is equivalent to
some subgraph of $U$.
Then $\Delta$ is switching equivalent to some subdigraph
of $\Delta_0$.
\end{lemma}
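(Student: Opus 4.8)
The plan is to turn the hypothesis about $\cS(\Delta)$ into a single chain of strong equivalences ending at the Hermitian adjacency matrix of an explicit subdigraph of $\Delta_0$, and then to quote Lemma~\ref{lem:4way}. First, since $\cS(\Delta)$ is disconnected, Lemma~\ref{lem:1} yields exactly two connected components $S_1$ and $S_2$ together with
\[
H(\Delta)\approx A(S_1)\approx A(S_2).
\]
After relabelling we may assume $S_1$ is the component that is equivalent to some subgraph $U'$ of $U$, so that $A(S_1)\sim A(U')$; by the definition of equivalence this means either $A(S_1)\approx A(U')$ or $A(S_1)\approx -A(U')$.

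The main obstacle, and the step I expect to carry the argument, is to remove this sign ambiguity, since the hypothesis only supplies the weaker relation $\sim$ whereas switching equivalence of digraphs corresponds to the stronger relation $\approx$. Here I would exploit the bipartite structure of $U$. Write $V(U)=V_1\cup V_2$ for the bipartition, and set $W=V(U')$. The subgraph $U'$ inherits the bipartition $(V_1\cap W)\cup(V_2\cap W)$, so every edge of $U'$ joins the two parts. Let $D_0$ be the diagonal matrix indexed by $W$ whose diagonal entry is $1$ on $V_1\cap W$ and $-1$ on $V_2\cap W$. Then $D_0\in U_{|W|}(\Z[i])$ and $D_0 A(U') D_0^*=-A(U')$, because each nonzero off-diagonal entry is multiplied by $1\cdot(-1)=-1$. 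Hence $A(U')\approx -A(U')$, and the two cases above collapse to the single conclusion $A(S_1)\approx A(U')$.

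It then remains to realise $A(U')$ inside $\Delta_0$. Let $\Delta_0'$ be the subdigraph of $\Delta_0$ induced on $W$. By Definition~\ref{dfn:canonical} we have $H(\Delta_0)=D^*A(U)D$ for the diagonal matrix $D$ described there, and since $D$ is diagonal, restricting to the rows and columns indexed by $W$ gives
\[
H(\Delta_0')=D_W^*A(U')D_W,
\]
where $D_W$ is the principal submatrix of $D$ on $W$. As $D_W\in U_{|W|}(\Z[i])$, this shows $H(\Delta_0')\approx A(U')$. Chaining the strong equivalences obtained above,
\[
H(\Delta)\approx A(S_1)\approx A(U')\approx H(\Delta_0'),
\]
so Lemma~\ref{lem:4way} shows that $\Delta$ is switching equivalent to $\Delta_0'$, which is a subdigraph of $\Delta_0$, as required. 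Once the bipartite negation trick of the second paragraph promotes the hypothesised equivalence to strong equivalence, everything else is a direct application of Lemma~\ref{lem:1}, the definition of the canonical digraph, and the principal-submatrix description of subdigraphs.
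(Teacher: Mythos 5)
Your proof is correct and follows essentially the same route as the paper's: use bipartiteness of $U'$ to get $A(U')\approx -A(U')$ and so collapse equivalence to strong equivalence, use Lemma~\ref{lem:1} to get $H(\Delta)\approx A(S_1)$, realise $A(U')$ as the Hermitian adjacency matrix of a subdigraph of $\Delta_0$, and finish with Lemma~\ref{lem:4way}. The only cosmetic difference is that you unwind Definition~\ref{dfn:canonical} directly (restricting the diagonal matrix $D$ to $W$) where the paper cites Lemma~\ref{lem:bipartite0} for $A(U)\approx H(\Delta_0)$ and then passes to principal submatrices; you also spell out the $\pm1$ diagonal sign-flip that the paper leaves implicit.
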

\begin{proof}
By the assumption, $\cS(\Delta)$ has a connected component $S_1$
which is equivalent to some subgraph $U'$ of $U$.
Since $U$ is bipartite, so is $U'$. Thus $A(U')$ is strongly equivalent to
$-A(U')$. This implies that $A(S_1)\approx A(U')$.
Since $A(U)\approx H(\Delta_0)$ by Lemma~\ref{lem:bipartite0},
we have $A(U')\approx H(\Delta'_0)$ for some subdigraph $\Delta'_0$ of $\Delta_0$.
Since $H(\Delta)\approx A(S_1)$ by Lemma~\ref{lem:1},
we conclude  $H(\Delta)\approx H(\Delta'_0)$. The result follows from
Lemma~\ref{lem:4way}.
\end{proof}

\section{Spectral radius at most $2$}\label{sec:3}

Hermitian adjacency matrices with spectral radius at most $2$ are
considered in a broader context in \cite{G}.
Greaves \cite[Theorem~3.1]{G} classified,
among others,
cyclotomic matrices
over Gaussian integers with unit entries and zero diagonals,
under the assumption of maximality and
indecomposability.
It is also shown in \cite[Theorem~3.4]{G} that
every indecomposable cyclotomic matrix over Gaussian integers is
a principal submatrix of a maximal one.
These matrices are described in terms of weighted digraphs in \cite{G},
which are not digraphs in our sense.
Alternatively, we can describe each of these matrices $H$ by giving a set of
vectors whose Gram matrix is $H+2I$.
For convenience, if $H+2I$ is the Gram matrix of a set of vectors
each of which has squared norm $2$,
then we call $H$ the \textbf{displaced Gram matrix} of this set.
We denote by $e_1,e_2,\dots,e_n$, the standard orthonormal
basis of the vector space $\C^n$.

For a positive integer $k$, we define a matrix
$T^{(1)}_{2k}$ as
the displaced Gram matrix of the
set of vectors
\[\{e_p\pm e_{p+1}\mid 1\leq p\leq k\},\]
where indices are read modulo $k$.
We also define a matrix
$T^{(i)}_{2k}$ as the
displaced Gram matrix of the
set of vectors
\[\{e_p\pm e_{p+1}\mid 1\leq p< k\}\cup\{ie_k\pm e_1\}.\]
We define three more matrices
$S_8^\dag$, $S_{14}$, and $S_{16}$, as follows:
\[S_8^\dag=\begin{bmatrix}
0&-1&-1&i&1&0&0&0\\
-1&0&i&-1&0&1&0&0\\
-1&-i&0&1&0&0&1&0\\
-i&-1&1&0&0&0&0&1\\
1&0&0&0&0&1&1&-i\\
0&1&0&0&1&0&-i&1\\
0&0&1&0&1&i&0&-1\\
0&0&0&1&i&1&-1&0
\end{bmatrix},\]
\[S_{14}=\begin{bmatrix}
0&M\\ M^\top&0\end{bmatrix},\]
\[S_{16}=\begin{bmatrix}
C+C^\top&-C+C^\top\\ C-C^\top&C^3+C^5\end{bmatrix},\]
where $M$ and $C$ are the circulant matrices of order $7$ and $8$,
with first row
$[1,1,0,1,0,0,-1]$ and
$[0,1,0,\dots,0]$, respectively.

\begin{theorem}[{\cite[Theorems~3.1 and 3.4]{G}}]\label{thm:Gary}
Let $H$ be an indecomposable Hermitian matrix
with spectral radius at most $2$
that has only zeros on the diagonal and whose nonzero entries
are from $\{\pm1,\pm i\}$.
Then $H$ is equivalent to a principal submatrix of one of the matrices
$T_{2k}^{(1)}$, $T_{2k}^{(i)}$, $S_8^\dag$, $S_{14}$, and $S_{16}$,
which are maximal subject to these conditions.
\end{theorem}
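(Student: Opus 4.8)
The statement is a classification result in the spirit of McKee--Smyth \cite{MS} and Greaves \cite{G}, so the plan is to reduce it to a geometric configuration, make cyclotomicity hereditary by eigenvalue interlacing, and then run a \emph{growing} argument that enumerates all maximal connected configurations. First I would record the basic reduction. Since $H$ has zero diagonal and $\rho\le2$ (all eigenvalues in $[-2,2]$), the matrix $M=2I+H$ is positive semidefinite with every diagonal entry equal to $2$ and off-diagonal entries in $\{0,\pm1,\pm i\}$. Hence $M$ is the Gram matrix of a set $\Sigma=\{v_1,\dots,v_n\}\subseteq\C^n$ with $(v_p,v_p)=2$ for all $p$ and $(v_p,v_q)\in\{0,\pm1,\pm i\}$ for $p\ne q$; that is, $H$ is the displaced Gram matrix of $\Sigma$. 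The $v_p$ are \emph{Gaussian roots}: norm-$2$ vectors in the lattice $L=\sum_p\Z[i]v_p$ carrying the (positive semidefinite) standard Hermitian form. Indecomposability of $H$ corresponds exactly to connectedness of the graph on $\Sigma$ joining $v_p,v_q$ whenever $(v_p,v_q)\ne0$, and the ambient equivalence is the action of $U_n(\Z[i])$ together with conjugation and negation, i.e.\ the relation $\sim$ of Section~\ref{subsec:2.1}. So the task becomes: classify connected systems of Gaussian roots up to $\sim$.

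The engine is Cauchy interlacing: for any principal submatrix $B$ of $H$ one has $\lambda_{\max}(B)\le\lambda_{\max}(H)\le2$ and $\lambda_{\min}(B)\ge\lambda_{\min}(H)\ge-2$, so every sub-configuration of $\Sigma$ again spans a positive semidefinite form and is cyclotomic. This lets one reason locally: a configuration is forbidden the moment some small induced sub-configuration fails to be positive semidefinite after the shift by $2I$. I would therefore first compile, by direct computation, the finite list of forbidden configurations on two, three, and four roots; this is where the Gaussian unit enters and produces constraints genuinely stronger than the real $\{0,\pm1\}$ case, and these play the role of the excluded subgraphs in the $\mathsf{ADE}$ classification of Figure~\ref{fig:Dynkin}. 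A useful cross-check at this stage is to realify: writing $H=A+iB$ and forming $C=\left[\begin{smallmatrix}A&B\\ B^\top&A\end{smallmatrix}\right]$ as in \eqref{1a}, Lemma~\ref{lem:SDspec} gives $\rho(C)=\rho(H)$, so $C+2I$ is a real positive semidefinite Gram matrix of norm-$2$ vectors with inner products in $\{0,\pm1\}$, whence $\Sigma$ realifies to a subsystem of a (possibly extended) $\mathsf{ADE}$ root system by \cite{CGSS}. The Gaussian structure is precisely the fixed-point-free involution exchanging the two copies and sending $B\mapsto-B$, and it is this extra datum that is responsible for the new families.

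With the local constraints in hand I would run the growing argument. Passing to the quotient of $L$ by the radical of the form yields a positive definite Hermitian $\Z[i]$-lattice whose norm-$2$ vectors are the effective roots; one starts from a single root, repeatedly adjoins a neighbouring root, and at each step invokes the forbidden-configuration list (through interlacing) to bound the possible local structure, forcing the connected configuration to grow along a short list of templates. These templates terminate in the maximal systems $T_{2k}^{(1)}$, $T_{2k}^{(i)}$ and the three sporadic systems $S_8^\dag$, $S_{14}$, $S_{16}$. That each of these is in fact cyclotomic with $\rho=2$, and maximal, is then a finite verification: compute the spectra directly and check that adjoining any further Gaussian root destroys positive semidefiniteness. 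Consequently every indecomposable cyclotomic $H$ is equivalent to a principal submatrix of one of the listed maximal matrices, which is the full statement.

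The main obstacle is the growing step, not the reduction. In the real case the analogous enumeration is the classical $\mathsf{ADE}$ argument, but over $\Z[i]$ the additional inner-product values $\pm i$ roughly double the local branching and generate both the family $T_{2k}^{(i)}$ and the sporadic matrices, so the delicate part is organizing the cases so that the enumeration is provably exhaustive while identifying configurations only up to $\sim$ (so that $U_n(\Z[i])$-, conjugation-, and negation-symmetric cases are not overcounted). A secondary subtlety is the semidefinite boundary $\rho=2$: here the radical is nontrivial and the maximal systems are the affine/extended ones, so one must keep track of which configurations lie exactly on the boundary versus strictly inside, mirroring the passage from Dynkin to extended Dynkin diagrams in the undirected picture.
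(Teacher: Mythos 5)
The first thing to note is that the paper contains no proof of this statement: Theorem~\ref{thm:Gary} is quoted verbatim from Greaves \cite[Theorems~3.1 and 3.4]{G}, and the surrounding text only rephrases the matrices as displaced Gram matrices of vector sets. So your attempt must be measured against Greaves' proof, whose general strategy you do reconstruct plausibly: pass to the positive semidefinite matrix $2I+H$, view its Gram vectors as Gaussian roots, note that Cauchy interlacing makes cyclotomicity hereditary, and classify connected configurations up to the equivalence $\sim$ via excluded subconfigurations and a growing argument. Your realification remark via \eqref{1a} and \cite{CGSS} is also consistent with how the present paper links digraphs to real root systems (Lemma~\ref{lem:SDspec} and Section~\ref{sec:cr}).

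The genuine gap is that everything constituting the actual theorem is deferred rather than proved. You never compile the finite list of forbidden two-, three- and four-root configurations; you never show that your growing ``templates'' are exhaustive; and you never verify that $T_{2k}^{(1)}$, $T_{2k}^{(i)}$, $S_8^\dag$, $S_{14}$, $S_{16}$ are cyclotomic and maximal. These are not routine bookkeeping: they are the entire content of the classification, and in particular the three sporadic matrices can neither be produced nor certified complete from what you have written. Moreover, the structural anchor that makes such an enumeration terminate is missing from your outline: in \cite{MS} and \cite{G} the argument is controlled by the fact that the maximal examples $M$ satisfy $M^2=4I$ (equivalently, the squared moduli of the entries in each row sum to $4$ --- for instance, for $T_{2k}^{(1)}$ this follows because its $2k$ Gram vectors span only a $k$-dimensional space, forcing the spectrum to be $\pm2$ with equal multiplicities), and it is this degree regularity that bounds the local branching. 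Alternatively one could anchor the enumeration on the classification of irreducible Gaussian root lattices ($L\otimes\Z[i]$, $\mathsf{D}_{2n}^{\C}$, $\mathsf{E}_8^{\C}$; cf.\ \cite{KM} and Section~\ref{sec:cr}) and classify root subsets inside each lattice. Without some such ingredient, the assertion that the growing process lands on ``a short list of templates'' is a restatement of the theorem, not a proof of it.
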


The reader might think that Theorem~\ref{thm:Gary} immediately implies
the classification of maximal digraphs with
spectral radius at most $2$
up to switching equivalence. In view of Lemma~\ref{lem:4way}, however,
switching equivalence of digraphs amounts to strong equivalence of
their Hermitian adjacency matrices. Since Theorem~\ref{thm:Gary}
classifies possible matrices up to equivalence, but not strong equivalence,
$H$ may not be strongly
equivalent to $-H$ for those $H$ appearing in Theorem~\ref{thm:Gary}.
We shall show however, that for
each of the matrices
$H$ appearing in Theorem~\ref{thm:Gary}, $H$ turns out to be strongly
equivalent to $-H$. Moreover, we shall show that
$H$ is strongly equivalent to
the Hermitian adjacency matrix of a digraph.

Recall that the digraphs
$\Delta_{2k}^{(1)}$ and $\Delta_{2k}^{(i)}$
are defined in Fig.~\ref{fig:Delta1e}--\ref{fig:Deltaio}.

\begin{proposition}\label{prop:T}
Let $x=1$ or $i$.
Every Hermitian matrix
which is equivalent to $T_{2k}^{(x)}$ is strongly equivalent
to $H(\Delta_{2k}^{(x)})$.
\end{proposition}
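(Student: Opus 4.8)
The statement requires two things for each $x \in \{1, i\}$: first, that every Hermitian matrix equivalent to $T_{2k}^{(x)}$ is in fact \emph{strongly} equivalent to it (so that the distinction between $\sim$ and $\approx$ collapses for this family), and second, that this common strong-equivalence class is realized by the Hermitian adjacency matrix $H(\Delta_{2k}^{(x)})$ of an actual digraph. Since strong equivalence means $A = QBQ^*$ or $A = Q\overline{B}Q^*$ for some $Q \in U_n(\Z[i])$, while equivalence additionally allows passage from $B$ to $-B$, the crux of the first claim is to show that $T_{2k}^{(x)} \approx -T_{2k}^{(x)}$. The plan is to establish this via an explicit monomial unitary $Q \in U_n(\Z[i])$ (a signed/phased permutation matrix) witnessing $T_{2k}^{(x)} = Q(-T_{2k}^{(x)})Q^*$, possibly combined with complex conjugation.

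First I would work with the vector-set description rather than the matrices directly, since $T_{2k}^{(x)}$ is defined as a displaced Gram matrix. For $x = 1$, the vectors are $e_p \pm e_{p+1}$ with indices mod $k$; negating the Gram matrix corresponds (after the $+2I$ shift) to finding an isometry of $\C^k$ mapping the vector set $\Sigma = \{e_p \pm e_{p+1}\}$ to a set whose Gram matrix differs by the sign change encoded in $U_n(\Z[i])$. The natural candidates are the diagonal sign/phase changes $e_p \mapsto \zeta_p e_p$ with $\zeta_p \in \{\pm 1, \pm i\}$ together with coordinate permutations; I would look for an assignment of phases to the coordinates that flips the off-diagonal signs of the Gram matrix globally. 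A standard trick is a two-coloring: send $e_p \mapsto i^{p} e_p$ or alternate signs along the cycle, checking how each inner product $(e_p \pm e_{p+1}, e_q \pm e_{q+1})$ transforms. For $x = i$, the vector set includes the twisted generators $i e_k \pm e_1$, so the phase assignment must be compatible with this extra factor of $i$; here I expect conjugation (the $\overline{B}$ option in the definition of $\approx$) to play a genuine role, since conjugation swaps the roles of $i$ and $-i$ in precisely the entries where the twist lives.

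Next, having shown $T_{2k}^{(x)} \approx -T_{2k}^{(x)}$, I would verify that the strong-equivalence class of $T_{2k}^{(x)}$ contains the Hermitian adjacency matrix of a genuine digraph, namely $H(\Delta_{2k}^{(x)})$. This amounts to exhibiting a $Q \in U_n(\Z[i])$ conjugating $T_{2k}^{(x)}$ into a matrix all of whose entries lie in $\{0, \pm 1, \pm i\}$ with zero diagonal and with the specific pattern of arcs, digons, and converse-arcs read off from Figures~\ref{fig:Delta1e}--\ref{fig:Deltaio}. Concretely, I would use the labels $i(e_1+e_2)$, $e_2+e_3$, etc.\ attached to the vertices in those figures: these labels are exactly a choice of representative vectors in each switching class, and computing their pairwise inner products and comparing with the displaced Gram data of $T_{2k}^{(x)}$ should show the two Gram matrices coincide after the appropriate monomial change of basis. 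By Lemma~\ref{lem:4way} this is the same as switching equivalence of the corresponding digraphs. Finally, combining with the first part: given any $H$ with $H \sim T_{2k}^{(x)}$, we have $H \approx T_{2k}^{(x)}$ or $H \approx -T_{2k}^{(x)}$, and since $T_{2k}^{(x)} \approx -T_{2k}^{(x)}$, in either case $H \approx T_{2k}^{(x)} \approx H(\Delta_{2k}^{(x)})$.

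The main obstacle I anticipate is the case analysis on the parity of $k$. The figures distinguish $k$ even from $k$ odd for both $\Delta_{2k}^{(1)}$ and $\Delta_{2k}^{(i)}$, which strongly suggests that the monomial matrix $Q$ realizing $T_{2k}^{(x)} \approx -T_{2k}^{(x)}$ (and the one realizing the digraph form) depends on $k \bmod 2$; a global phase assignment around the $k$-cycle must close up consistently, and whether it does—and whether one needs conjugation—is governed by the parity. I would therefore expect to split into four subcases ($x \in \{1,i\}$ times $k$ even/odd) and, in each, write down the explicit diagonal phase matrix and permutation, then verify entrywise that conjugation transforms the displaced Gram matrix into the claimed adjacency pattern. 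The bookkeeping of signs and factors of $i$ in the twisted case $x=i$, where the closure condition around the cycle interacts with the extra $i$ in $ie_k \pm e_1$, is where the calculation is most delicate, but it reduces to a finite, mechanical check once the correct $Q$ is identified.
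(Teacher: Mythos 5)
Your decomposition of the problem is exactly the paper's: (a) show $T_{2k}^{(x)}\approx -T_{2k}^{(x)}$, so that equivalence to $T_{2k}^{(x)}$ collapses to strong equivalence, and (b) show $T_{2k}^{(x)}\approx H(\Delta_{2k}^{(x)})$ by rescaling individual defining vectors by units so that their displaced Gram matrix becomes the matrix encoded by the vertex labels in Fig.~\ref{fig:Delta1e}--\ref{fig:Deltaio}, with a case split on the parity of $k$. Your step (b) and your concluding deduction ($H\sim T$ implies $H\approx\pm T\approx T\approx H(\Delta_{2k}^{(x)})$) coincide with the paper's proof and are fine.

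The gap is in step (a), and it is genuine. You propose to find the negation via an isometry of $\C^k$ built from coordinate phase changes $e_p\mapsto\zeta_p e_p$ and coordinate permutations, ``checking how each inner product transforms.'' But a unitary of the ambient space preserves all inner products, and an anti-unitary merely conjugates them, so such a map can never change the displaced Gram matrix, let alone negate it: if an ambient isometry $U$ satisfies $Uv_j=\zeta_j v_{\tau(j)}$ for the defining vectors $v_j$, then the monomial matrix $Q$ determined by $(\tau,\zeta)$ obeys $QT_{2k}^{(x)}Q^*=T_{2k}^{(x)}$, so ambient symmetries produce only \emph{automorphisms} of $T_{2k}^{(x)}$, never $-T_{2k}^{(x)}$ (and conjugation only yields $T_{2k}^{(x)}\approx\overline{T_{2k}^{(x)}}$, which is not $-T_{2k}^{(x)}$ since the matrix has nonzero real entries). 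Even reading your phase assignment charitably as attached to the $2k$ vectors (the rows of the matrix) rather than to the $k$ coordinates, a diagonal $D$ with $DT_{2k}^{(x)}D^*=-T_{2k}^{(x)}$ forces $\zeta_j\overline{\zeta_l}=-1$ across every edge of the non-orthogonality graph; that graph is the doubled $k$-cycle, which is bipartite only for $k$ even, and for $T_{2k}^{(1)}$ with $k$ odd conjugation cannot help because the matrix is real---so the two-coloring idea provably fails there. The missing idea is a permutation of the \emph{vectors} that is not induced by any ambient map: pairing $e_p+e_{p+1}$ with $e_p-e_{p+1}$ puts $T_{2k}^{(x)}$ in the block form $\left[\begin{smallmatrix}A&B\\B&-A\end{smallmatrix}\right]$, and conjugation by the half-swap $Q=\left[\begin{smallmatrix}0&I\\-I&0\end{smallmatrix}\right]$ then gives $-T_{2k}^{(x)}$ uniformly in $k$ and in $x$. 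In particular, contrary to your expectation, no parity analysis is needed in this step; parity enters only in step (b), exactly as in the paper.
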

\begin{proof}
Observe that the matrix $T_{2k}^{(x)}$ is of the form
\[\begin{bmatrix}
A&B\\B&-A
\end{bmatrix}.\]
Then $QT_{2k}^{(x)}Q^*=-T_{2k}^{(x)}$ for
\[Q=\begin{bmatrix}
0&I\\-I&0
\end{bmatrix}.
\]
Thus, it suffices to show that $T_{2k}^{(x)}$ is strongly equivalent to
$H(\Delta_{2k}^{(x)})$.

Suppose first $x=1$.
If $k$ is even, then $T_{2k}^{(1)}$ is strongly equivalent to
the displaced Gram matrix of the set of vectors
\begin{align*}
&\{e_p\pm e_{p+1}\mid 1\leq p\leq k,\; p\text{ even}\}\cup
\{i(e_p\pm e_{p+1})\mid 1\leq p\leq k,\; p\text{ odd}\}
\end{align*}
and this is the  matrix $H(\Delta_{2k}^{(1)})$
(see Fig.~\ref{fig:Delta1e}).
Next suppose $k$ is odd. Then $T_{2k}^{(1)}$ is strongly equivalent to
the displaced Gram matrix of the set of vectors
\begin{align*}
&\{e_p\pm e_{p+1}\mid 1\leq p\leq k,\; p\text{ even}\}\cup
\{i(e_p\pm e_{p+1})\mid 1\leq p< k,\; p\text{ odd}\}
\\&\cup\{i(e_k+e_1)\}\cup\{-i(e_k-e_1)\},
\end{align*}
and this is the  matrix $H(\Delta_{2k}^{(1)})$
(see Fig.~\ref{fig:Delta1o}).

Next suppose $x=i$.
If $k$ is odd, then
Then $T_{2k}^{(i)}$ is strongly equivalent to
the displaced Gram matrix of the set of vectors
\[\{e_p\pm e_{p+1}\mid 1\leq p\leq k,\; p\text{ even}\}\cup
\{i(e_p\pm e_{p+1})\mid 1\leq p< k,\; p\text{ odd}\}
\cup\{ie_k\pm e_1\},\]
and this is the  matrix $H(\Delta_{2k}^{(i)})$
(see Fig.~\ref{fig:Deltaio}).
If $k$ is even, then $T_{2k}^{(i)}$ is strongly equivalent to
the displaced Gram matrix of the set of vectors
\begin{align*}
&\{e_p\pm e_{p+1}\mid 1\leq p\leq k,\; p\text{ even}\}\cup
\{i(e_p\pm e_{p+1})\mid 1\leq p< k,\; p\text{ odd}\}
\\&\cup\{\pm ie_{k-1}+ie_k\}\cup\{ie_k\pm e_1\},
\end{align*}
and this is the  matrix $H(\Delta_{2k}^{(i)})$
(see Fig.~\ref{fig:Deltaie}).
\end{proof}

\begin{proposition}\label{prop:S}
For $S=S_8^\dagger,S_{14}$ or $S_{16}$,
every Hermitian matrix
which is equivalent to $S$ is strongly equivalent
to $H(\Delta_{8}^\dagger),H(\Delta_{14})$ or $H(\Delta_{16})$,
respectively.
\end{proposition}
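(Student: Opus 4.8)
The plan is to follow the template of the proof of Proposition~\ref{prop:T}: for each $S\in\{S_8^\dagger,S_{14},S_{16}\}$ it suffices to establish the two strong equivalences
\[
S\approx -S \qquad\text{and}\qquad S\approx H(\Delta),
\]
where $\Delta$ is the corresponding digraph $\Delta_8^\dagger,\Delta_{14}$ or $\Delta_{16}$. Indeed, if $A$ is equivalent to $S$, then by definition $A\approx S$ or $A\approx -S$; the first equivalence above collapses these two cases into $A\approx S$, and the second then yields $A\approx H(\Delta)$, as required.

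For $S_8^\dagger$ I would first observe that, written in $4\times4$ blocks, it has the shape
\[
S_8^\dagger=\begin{bmatrix}P&I\\ I&-P\end{bmatrix},
\]
which is exactly the block form $\begin{bmatrix}A&B\\ B&-A\end{bmatrix}$ exploited in Proposition~\ref{prop:T}. Hence $QS_8^\dagger Q^*=-S_8^\dagger$ for $Q=\begin{bmatrix}0&I\\ -I&0\end{bmatrix}\in U_8(\Z[i])$, giving $S_8^\dagger\approx -S_8^\dagger$. It then remains to produce a diagonal matrix $D$ with entries in $\{\pm1,\pm i\}$ (a four-way switching) for which $D^*S_8^\dagger D$ has all its nonzero entries in $\{1,\pm i\}$ and coincides with $H(\Delta_8^\dagger)$; equivalently, to realise $S_8^\dagger$ as the displaced Gram matrix of a vector set whose digraph is the one in Fig.~\ref{fig:Delta8dagger}. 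Since $\cS(\Delta_8^\dagger)$ is connected (it contains a directed triangle), this case genuinely requires clearing the $-1$ entries by a gauge transformation rather than the bipartite machinery.

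For $S_{14}$ and $S_{16}$ I would instead use that both matrices are real and symmetric, hence adjacency matrices of signed graphs. For $S_{14}=\begin{bmatrix}0&M\\ M^\top&0\end{bmatrix}$ the $7+7$ splitting is visibly a bipartition. For $S_{16}$ one checks that assigning to each vertex the parity of its index (within either of the two index-blocks) gives a proper $2$-colouring: the blocks $C+C^\top$, $-C+C^\top$, $C-C^\top$, and $C^3+C^5$ all connect vertices of opposite index-parity, so the underlying graph is bipartite as well. Bipartiteness yields $A(S)\approx -A(S)$, exactly as in the proof of Lemma~\ref{lem:bipartite}, which settles the first equivalence. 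For the second, one identifies $\Delta_{14}$ and $\Delta_{16}$ (Fig.~\ref{fig:Delta8dagger}) as canonical digraphs of the connected bipartite signed graphs $S_{14}$ and $S_{16}$, whereupon Lemma~\ref{lem:bipartite0} gives $H(\Delta)\approx A(S)=S$ directly.

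The main obstacle is the explicit bookkeeping hidden in the second equivalence. For $S_8^\dagger$ one must exhibit the switching $D$ (or the vector realisation) that turns the listed matrix, with its $-1$ entries, into the Hermitian adjacency matrix of the specific digraph in Fig.~\ref{fig:Delta8dagger}, and verify the match entry by entry; the non-bipartite holonomy around the directed triangle is what makes this more than routine. For $S_{16}$ the fiddly point is confirming both the connectivity and the bipartition of the $16$-vertex signed graph defined by the circulant blocks above, and then matching its canonical digraph against $\Delta_{16}$; once the bipartition is in hand, however, this reduces to a finite verification through Lemma~\ref{lem:bipartite0}.
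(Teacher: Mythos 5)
Your skeleton matches the paper's (reduce everything to the two strong equivalences $S\approx -S$ and $S\approx H(\Delta)$), and your treatment of the \emph{first} equivalence is correct and in fact tidier than the paper's: the block decomposition $S_8^\dagger=\left[\begin{smallmatrix}P&I\\ I&-P\end{smallmatrix}\right]$ is right, so the same $Q=\left[\begin{smallmatrix}0&I\\ -I&0\end{smallmatrix}\right]$ as in Proposition~\ref{prop:T} gives $S_8^\dagger\approx-S_8^\dagger$; and $S_{14}$, $S_{16}$ are indeed real symmetric bipartite signed graphs (your index-parity $2$-colouring of $S_{16}$ checks out), so negation is achieved by a $\pm1$ signing. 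The paper instead just exhibits, for each case, two explicit diagonal matrices and asserts identities such as $D_1S_8^\dagger D_1^*=-D_2^*\overline{S_8^\dagger}D_2=H(\Delta_8^\dagger)$; your structural observations replace half of those verifications.

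The gap is in the \emph{second} equivalence $S\approx H(\Delta)$, which is the actual content of the proposition and is the entirety of the paper's proof (the explicit switchings $D_1$): you never produce it, and for $S_{16}$ the mechanism you propose cannot work as stated. A canonical digraph of a bipartite signed graph contains no digons at all: every edge joins the two colour classes, so every nonzero entry of $D^*A(S)D$ lies in $\{\pm i\}$. But $\Delta_{16}$ in Fig.~\ref{fig:Delta8dagger} visibly contains sixteen digons (the outer octagon and the inner octagram), so it is not a canonical digraph of $S_{16}$, and ``matching its canonical digraph against $\Delta_{16}$'' via Lemma~\ref{lem:bipartite0} must fail. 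What is true is only that the canonical digraph is \emph{switching equivalent} to $\Delta_{16}$, and proving that requires exhibiting an explicit switching anyway --- precisely the bookkeeping you hoped to bypass. Indeed, the paper's switching $D_1=\mathrm{diag}(i,\dots,i,1,\dots,1)$ uses the inner/outer block split, which is \emph{not} a proper $2$-colouring of $G(S_{16})$, exactly so that the within-block $+1$ entries survive as digons. (For $S_{14}$ your identification is sound, since $\Delta_{14}$ consists only of arcs and really is a canonical digraph of $S_{14}$; for $S_8^\dagger$ you correctly note that a gauge matrix $D$ must be found, but finding it --- the paper's $D_1=\mathrm{diag}(-i,i,i,1,1,1,1,-i)$ --- is the whole point, not a footnote.) So: keep your first halves, but the $S_8^\dagger$ and $S_{16}$ cases still need the explicit matrices the paper supplies, and your Lemma~\ref{lem:bipartite0} route for $S_{16}$ must be repaired or abandoned.
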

\begin{proof}
For each $S=S_8^\dagger,S_{14}$ or $S_{16}$,
it suffices to show the Hermitian adjacency matrix
of the corresponding digraph is strongly equivalent to $S$ and $-S$.

Let $D_1$ and $D_2$ be the diagonal matrices with diagonal entries
\[[-i,i,i,1,1,1,1,-i]\text{ and }
[ 1, 1, 1, -i, i, -i, -i, -1 ],\]
respectively. Then $D_1S_8^\dagger D_1^*=-D_2^*\overline{S_8^\dagger}D_2$
is the Hermitian adjacency matrix of $\Delta_8^\dagger$.

Let $D_1$ be the diagonal matrix with diagonal entries
$[1,\dots,1,i,\dots,i]$, where $1$ and $i$ are repeated $7$ times each.
Then $D_1S_{14}D_1^*=-D_1^*S_{14}D_1$ is the Hermitian adjacency
matrix of $\Delta_{14}$.

Let $D_1$ and $D_2$ be the diagonal matrices
with diagonal entries
\[[i,\dots,i,1,\dots,1],\text{ and }
[i,-i,i,-i,i,-i,i,-i,1,-1,1,-1,1,-1,1,-1],\]
respectively. Then $D_1S_{16}D_1^*=-D_2S_{16}D_2^*$ is the Hermitian adjacency
matrix of $\Delta_{16}$.
\end{proof}

Let $\Delta$ be a connected digraph with $\rho(\Delta)\leq2$. Let
$H$ be the Hermitian adjacency matrix of $\Delta$. Then by
Theorem~\ref{thm:Gary}, $H$ is equivalent to a principal submatrix of
one of the matrices
\begin{equation}\label{eq:T}
T_{2k}^{(1)},\quad T_{2k}^{(i)},\quad S_8^\dag,\quad S_{14},
\text{ or }S_{16}.
\end{equation}
This implies that $H$ is a principal submatrix of a
Hermitian matrix $T$ which is equivalent to one of the matrices in \eqref{eq:T}.
By Propositions~\ref{prop:T} and \ref{prop:S},
$T$ is strongly equivalent to one of the matrices
\begin{equation}\label{eq:H}
H(\Delta_{2k}^{(1)}),\quad
H(\Delta_{2k}^{(i)}),\quad H(\Delta_8^\dag),\quad H(\Delta_{14})
\text{ or }H(\Delta_{16}).
\end{equation}
This implies that $H$ is strongly equivalent to a principal submatrix
of one of the matrices in \eqref{eq:H}.
By Lemma~\ref{lem:4way},
$\Delta$ is switching equivalent to a subdigraph of a digraph
in Theorem~\ref{thm:main} \ref{i:t1}--\ref{i:sp1}.

Conversely, the digraphs in
Theorem~\ref{thm:main} \ref{i:t1}--\ref{i:sp1} have
spectral radius at most $2$ by Theorem~\ref{thm:Gary}.
The digraphs in Theorem~\ref{thm:main} \ref{i:t1}--\ref{i:t2} contain
a subdigraph which is switching equivalent to $C_4$, so they have
spectral radius exactly $2$. It can be checked directly that
the digraphs in Theorem~\ref{thm:main} \ref{i:sp1}
have spectral radius exactly $2$.
This completes the proof of the
first part of Theorem~\ref{thm:main}.

\section{Spectral radius less than $2$}\label{sec:4}

Throughout this section, we let
$\Delta$ be a connected digraph with spectral radius less than $2$,
and complete the proof of the second part of Theorem~\ref{thm:main}.
Lemma~\ref{lem:SDspec} shows that $\cS(\Delta)$ has spectral radius less than $2$.
Connected signed graphs with  spectral radius less than $2$
are essentially classified by the following theorem
due to McKee and Smyth.

\begin{theorem}[{\cite[Theorem~4]{MS}}]\label{thm:McS}
Up to equivalence, the  connected signed  graphs maximal with
respect to having all their eigenvalues in $(-2,2)$ are the eleven
$8$-vertex sporadic examples $U_1,\dots,U_{11}$ shown in Fig.~\ref{fig:F-U},
and the infinite family $ O_{2k}$ of $2k$-cycles with
one negative edge
for $2k\ge 8$, shown in Fig.~\ref{fig:QO}.
Further, every  connected
signed graph having all its
eigenvalues in $(-2,2)$ is either contained in a maximal one, or
is a subgraph of one of the signed graphs $Q_{hk}$ of Fig.~\ref{fig:QO}
for $h+k\ge 4$.
\end{theorem}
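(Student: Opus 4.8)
The plan is to derive the open-interval classification from the closed-interval (\emph{cyclotomic}) one, using a root-lattice criterion to decide exactly when the boundary values $\pm2$ are attained. The structural fact that makes this tractable is monotonicity: by Cauchy interlacing, the eigenvalues of any induced subgraph of $S$ (a principal submatrix of $A(S)$) interlace those of $S$, so $\lambda_{\max}$ is nonincreasing and $\lambda_{\min}$ is nondecreasing under passage to subgraphs. Hence the property of having all eigenvalues in $[-2,2]$, and likewise in $(-2,2)$, is inherited by every subgraph, and it suffices to pin down the maximal members together with the obstructions. I would follow the overall method of \cite{MS}.

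The key detection step is a Gram-matrix criterion for the boundary. A connected signed graph $S$ has $-2$ as an eigenvalue if and only if $A(S)+2I$ is a singular positive semidefinite matrix; equivalently, viewing $A(S)$ as a displaced Gram matrix, the set $\Sigma$ of vectors of squared norm $2$ realizing it (with pairwise inner products in $\{0,\pm1\}$) is linearly dependent. Symmetrically, $2$ is an eigenvalue of $S$ precisely when the analogous configuration built from $-A(S)$ is dependent. By the description of such vector sets as subsets of fundamental root systems \cite{CGSS}, an indecomposable dependent configuration is exactly an extended (affine) Dynkin diagram, whereas an independent one is of finite type. Thus a connected cyclotomic $S$ lies in the open interval $(-2,2)$ if and only if neither the configuration attached to $A(S)$ nor that attached to $-A(S)$ is dependent.

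Next I would carry out the genuinely hard step: classifying the maximal connected signed graphs all of whose eigenvalues lie in $[-2,2]$. The engine is a local saturation argument: fix a vertex and repeatedly adjoin neighbours, forcing the admissible local structure by requiring the $2\times2$, $3\times3$ and $4\times4$ principal minors of $2I\pm A$ to remain positive semidefinite. This bounds degrees, forbids the short signed cycles of the wrong sign, and pins down how signs may propagate around cycles. One then shows that every connected cyclotomic signed graph embeds in a maximal one and that the maximal ones form a short sporadic list together with the toroidal and long-cycle families. I expect this saturation analysis to be the main obstacle, since essentially all the combinatorial difficulty is concentrated here.

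Finally I would read off the open-interval answer. For each maximal cyclotomic graph I would exhibit the kernel vectors of $2I-A$ and of $2I+A$ (equivalently, the dependencies among the squared-norm-$2$ vectors) and delete a minimal set of vertices killing all of them; the resulting maximal subgraphs with spectrum strictly inside $(-2,2)$ are the eleven sporadic examples $U_1,\dots,U_{11}$ and the family $O_{2k}$. For $O_{2k}$ a direct computation shows that the one-negative-edge $2k$-cycle has eigenvalues $2\cos\bigl(\tfrac{(2j+1)\pi}{2k}\bigr)$ for $0\le j<2k$, all strictly in $(-2,2)$, and that adjoining any vertex or edge produces a kernel vector of $2I\mp A$, which establishes maximality. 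The open-interval graphs that embed in no $U_i$ and in no $O_{2k}$ are precisely those already living inside the unbounded cyclotomic (toral/long-cycle) family; tracing the independence criterion through that family, where no single maximal open-interval graph can contain them all, identifies them as the subgraphs of the $Q_{hk}$ with $h+k\ge4$.
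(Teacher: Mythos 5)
This statement is not proved in the paper at all: it is McKee and Smyth's result, quoted verbatim as \cite[Theorem~4]{MS}, and the paper's ``proof'' is the citation itself. So your proposal can only be measured against the strategy of \cite{MS}, which it does echo: reduce to the cyclotomic (closed-interval) classification, then detect the boundary eigenvalues $\pm2$ via singularity of $2I\pm A$, i.e.\ via linear dependence of the realizing sets of norm-$2$ vectors inside a root lattice. Your interlacing remark, the Gram-matrix criterion, and the spectrum of $O_{2k}$ (eigenvalues $2\cos\bigl((2j+1)\pi/(2k)\bigr)$, never $\pm2$ since $2j+1$ is odd and $2k$ is even) are all correct.

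As a proof, however, the proposal has two genuine gaps. First, the entire engine --- the classification of maximal connected cyclotomic signed graphs (the analogues of $T_{2k}$, $S_{14}$, $S_{16}$ in \cite{MS}) --- is compressed into a promissory ``local saturation argument'' that is never executed; essentially all of the content of the theorem lives there, and nothing in your sketch shows that the saturation terminates in a finite sporadic list together with the stated infinite families. Second, your last paragraph asserts rather than derives the conclusions: that deleting vertices killing the kernel vectors of $2I\mp A$ yields exactly $U_1,\dots,U_{11}$ and $O_{2k}$, and that the open-interval graphs contained in no maximal one are exactly the subgraphs of $Q_{hk}$ with $h+k\ge4$. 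The $Q_{hk}$ clause in particular needs an argument you do not supply: one must show these graphs form infinite ascending chains ($Q_{h,k}\subset Q_{h+1,k}$, by deleting an end vertex of a pendant path) all of whose members stay strictly inside $(-2,2)$ --- which is precisely why no member is contained in a maximal open-interval graph --- and that no other unbounded chains occur. A smaller inaccuracy: for signed graphs, an indecomposable dependent configuration is an extended Dynkin diagram only \emph{up to switching} (e.g.\ the $4$-cycle with two negative edges is dependent but is not literally an affine diagram), so your ``exactly'' needs that qualifier.
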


In the notation of \cite{GM2}, the digraphs $C_3$ and $D_3$ have isomorphic
associated signed graph, which is $O_6$ in the notation of \cite{MS}. This means
that there are digraphs which are not switching equivalent, but their
associated signed graphs are switching equivalent. Thus, the results of
\cite{MS} does not immediately imply those of \cite{GM2}. In this section,
we show how to derive the results of \cite{GM2} from \cite{MS}.

\begin{lemma}\label{lem:O2k}
If $\cS(\Delta)$ has a connected component which is equivalent to a subgraph
of $O_{2k}$, then $\Delta$ is switching equivalent to one of
the digraphs in Theorem~\textup{\ref{thm:main}}
\ref{i:Dn}--\ref{i:path}.
\end{lemma}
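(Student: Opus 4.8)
The plan is to determine the underlying graph $G(\Delta)$ first, and then treat the path and cycle cases separately. By hypothesis $\cS(\Delta)$ has a connected component $S_1$ equivalent to a subgraph of $O_{2k}$. Since equivalence is realized by unit-monomial matrices in $U_n(\Z[i])$, it preserves the underlying graph up to isomorphism; as $O_{2k}$ is a cycle, $G(S_1)$ is a subgraph of a cycle and so has maximum degree at most $2$. If $\cS(\Delta)$ is connected then $S_1=\cS(\Delta)$. If it is disconnected, the proof of Lemma~\ref{lem:1} shows that the two components have adjacency matrices differing only by the sign of the off-diagonal block $B_1$, hence the same underlying graph; so the second component also has maximum degree at most $2$. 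In either case every vertex of $G(\cS(\Delta))$ has degree at most $2$, and Lemma~\ref{lem:asg2}\ref{sg2_3} gives $\deg_{G(\Delta)}(x)=\deg_{G(\cS(\Delta))}(x_1)\le 2$. As $\Delta$ is connected, $G(\Delta)$ is a path or a cycle.

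If $G(\Delta)$ is a path, then $\Delta$ is an orientation-with-digons of a tree, so no cycle obstructs switching. Processing the edges from one end, I would choose at each newly reached vertex a diagonal entry in $\{\pm1,\pm i\}$ turning the incident edge into a digon (Hermitian entry $1$); since each vertex meets the already-processed part in a single edge, no conflict arises. Thus $H(\Delta)$ is strongly equivalent to the all-digon matrix, and by Lemma~\ref{lem:4way} $\Delta$ is switching equivalent to $P_n$, which is case~\ref{i:path}.

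If $G(\Delta)=C_n$, fix a traversal and set $P=\prod_e H_e$, the product of the Hermitian entries around the cycle; since each edge contributes $1$ (digon) or $\pm i$ (arc), we have $P\in\{1,i,-1,-i\}$. A telescoping computation, using $\prod_j d_{v_j}\overline{d_{v_{j+1}}}=1$, shows that $P$ is unchanged under four-way switching, while taking the converse replaces $P$ by $\overline{P}$; hence $P$ \emph{up to complex conjugation} is a switching invariant. Conversely, four-way switching can move all of the twist onto a single edge, so this invariant is complete and $C_n$ carries exactly three switching classes. I would then compute $P$ for the four named families, obtaining $i^{n},\,i^{n+2},\,i^{n-1},\,i^{n+1}$ for $D_n,\tilde C_n,\tilde C'_n,\tilde C''_n$; for each fixed $n$ these exhaust $\{1,i,-1,-i\}$ (the two values $\pm i$ lying in one switching class), so $\Delta$ is switching equivalent to one of them. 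Finally, the eigenvalues of a cycle with invariant $P=e^{i\phi}$ are $2\cos\frac{\phi+2\pi j}{n}$, whence $\rho(\Delta)=2$ exactly when $P=1$, or when $P=-1$ and $n$ is odd; excluding these (forced by $\rho(\Delta)<2$, equivalently by the component $S_1$ being the \emph{unbalanced} cycle $O_{2k}$ rather than a balanced one) yields the congruence restrictions in cases~\ref{i:Dn}--\ref{i:Cn2}.

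The main obstacle is this last step: one must match each value of $P$ to the correct family for every residue of $n\bmod 4$ and check that the cases surviving the condition $\rho(\Delta)<2$ are precisely those permitted by the stated congruences. This requires tracking carefully the interplay between the digraph invariant $P$ and the balance of the associated even cycle $\cS(\Delta)$, together with the length ($n$ when $\cS(\Delta)$ splits into two components, $2n$ when it is connected) of that cycle.
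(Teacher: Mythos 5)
Your proposal is correct in substance and proves the lemma, but it follows a genuinely different route from the paper's. The paper splits on whether the distinguished component of $\cS(\Delta)$ is all of $O_{2k}$ or a proper subgraph (hence a path): in the first case it identifies $G(\Delta)$ as $C_k$ or $C_{2k}$ via Lemmas~\ref{lem:1} and \ref{lem:asg2} and then simply \emph{cites} the known classification of mixed cycles with spectral radius below $2$ (\cite[Theorem~3.9]{GM2}); in the second case it uses the degree-sum parity argument from Lemma~\ref{lem:asg2} to force $\cS(\Delta)$ to consist of two paths, so that $H(\Delta)$ is strongly equivalent to the adjacency matrix of a path by Lemmas~\ref{lem:1} and \ref{lem:4way}. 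You instead reduce to ``$G(\Delta)$ is a path or a cycle'' by a degree bound, settle the path case by explicit switching along a tree, and---this is the main difference---replace the citation of \cite{GM2} by a self-contained classification of digraphs on $C_n$: the product $P$ of Hermitian entries around the cycle is, up to conjugation, a complete switching invariant; the four families realize $P=i^n,\,i^{n+2},\,i^{n-1},\,i^{n+1}$; and the eigenvalues are $2\cos\bigl((\phi+2\pi j)/n\bigr)$ when $P=e^{i\phi}$. All of these steps are sound; what your approach buys is independence from \cite{GM2}, at the cost of the bookkeeping the paper avoids.

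The caveat concerns exactly the step you flagged as the main obstacle. Your criterion ``$\rho(\Delta)=2$ iff $P=1$, or $P=-1$ with $n$ odd'' is correct, but carrying out the matching does \emph{not} return ``precisely'' the stated congruences: for $D_n$ and $\tilde{C}_n$ you get $n\not\equiv0$ and $n\not\equiv2 \pmod 4$, agreeing with \ref{i:Dn} and \ref{i:Cn}, but for $\tilde{C}'_n$ and $\tilde{C}''_n$ your criterion forces the strictly stronger condition that $n$ be \emph{even}, since $P=i^{n-1}$ (resp.\ $i^{n+1}$) equals $-1$ with $n$ odd precisely when $n\equiv3\pmod4$ (resp.\ $n\equiv1\pmod4$). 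Concretely, $\tilde{C}'_3$ has Hermitian spectrum $\{1,1,-2\}$, hence spectral radius $2$, even though $3\not\equiv1\pmod4$; similarly $\tilde{C}''_5$ has $-2$ as an eigenvalue. This mismatch does not damage your proof of the lemma: the lemma only asserts that $\Delta$ is switching equivalent to \emph{some} digraph listed in \ref{i:Dn}--\ref{i:path}, and since ``$n$ even'' implies both $n\not\equiv1\pmod4$ and $n\not\equiv3\pmod4$, every cycle digraph with $\rho<2$ still lands in the list. But you should replace your claim of exact agreement by this one-way containment; your computation in fact shows that items \ref{i:Cn1} and \ref{i:Cn2}, as literally stated, also contain odd-length digraphs of spectral radius exactly $2$---a point that bears on the ``conversely'' clause of Theorem~\ref{thm:main} rather than on this lemma, and one the paper's own proof never confronts because it delegates the cycle case to \cite[Theorem~3.9]{GM2}.
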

\begin{proof}
Suppose first
$\cS(\Delta)$ has a component which is equivalent to
$O_{2k}$. Then $G(\cS(\Delta))$ is $C_{2k}$ or $2C_{2k}$ by Lemma~\ref{lem:1}.
In the former case, Lemma~\ref{lem:asg2} implies that $G(\Delta)$ is $C_k$,
while in the latter case, Lemma~\ref{lem:1} implies that $G(\Delta)$ is $C_{2k}$.
Then the classification follows from
\cite[Theorem~3.9]{GM2} (see also
\cite[Proposition~8.8]{GM}).

If $\cS(\Delta)$ has a component which is equivalent to
a proper subgraph of $O_{2k}$, then such a component is a path.
If $\cS(\Delta)$ is connected, then by
By Lemma~\ref{lem:asg2}~(iii), the sum of the degrees of vertices
of $G(\Delta)$ is odd, which is impossible. Thus
$\cS(\Delta)$ consists of two paths. Then by Lemma~\ref{lem:1},
$H(\Delta)$ is strongly equivalent to the adjacency matrix of
a path. By Lemma~\ref{lem:4way},
$\Delta$ is switching equivalent
to a path.
\end{proof}

\begin{lemma}\label{lem:Qhk}
If $\cS(\Delta)$ has a connected component which is equivalent to a subgraph
of $Q_{hk}$, then $\Delta$ is switching equivalent to one of the
digraphs in Theorem~\textup{\ref{thm:main}}
\ref{i:path}--\ref{i:Y}.
\end{lemma}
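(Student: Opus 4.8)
The plan is to pin down $Q_{hk}$ concretely, invoke the bipartite reduction of Lemma~\ref{lem:bipartite}, and then enumerate the relevant subdigraphs. From Fig.~\ref{fig:QO}, the underlying graph of $Q_{hk}$ is a $4$-cycle carrying two pendant paths, of lengths $h$ and $k$, attached at two opposite vertices; in particular $Q_{hk}$ is bipartite and its underlying graph is unicyclic of girth $4$. I would first record that the canonical digraph of $Q_{hk}$ is switching equivalent to $\square_{h,0,k,0}$: taking the two colour classes of the $4$-cycle as the bipartition in Definition~\ref{dfn:canonical}, every edge of $Q_{hk}$ acquires a weight $\pm i$ under $D^*A(Q_{hk})D$ and hence becomes an arc, the two pendant paths turn into the two directed tails, and because the $4$-cycle of $Q_{hk}$ is unbalanced the four arcs form $\tilde{C}_4$ rather than a coherently directed $4$-cycle.

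Next I would rule out that $\cS(\Delta)$ is connected. If it were, then by Lemma~\ref{lem:1} the digraph $\Delta$ would contain a cycle with an odd number of arcs, of some length $m\geq3$ in $G(\Delta)$, and the lift of such an odd cycle to $\cS(\Delta)$ is a single cycle of length $2m\geq6$. Since $\cS(\Delta)$ is equivalent to a subgraph of $Q_{hk}$, its underlying graph is isomorphic to a subgraph of the unicyclic, girth-$4$ graph $G(Q_{hk})$ and thus contains no cycle of length exceeding $4$, a contradiction. Therefore $\cS(\Delta)$ is disconnected, and as one of its components is equivalent to a subgraph of the bipartite signed graph $Q_{hk}$, Lemma~\ref{lem:bipartite} applies with $U=Q_{hk}$ and $\Delta_0=\square_{h,0,k,0}$, yielding that $\Delta$ is switching equivalent to a subdigraph of $\square_{h,0,k,0}$.

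It then remains to verify that every connected subdigraph of $\square_{h,0,k,0}$ is switching equivalent to one of $\square_{a,0,c,0}$, $Y_{a,1,1}$, or $P_n$, which I would do by case analysis on which of the four cycle-vertices $v_1,v_2,v_3,v_4$ survive; the key point is that subdigraphs are \emph{vertex-induced}. If all four survive, the induced subdigraph is the whole $\tilde{C}_4$ with initial segments of the tails, i.e.\ $\square_{a,0,c,0}$ with $a\leq h$, $c\leq k$. If exactly one cycle-vertex is deleted, then either a tail-bearing vertex is removed, in which case its tail is lost and the opposite tail-bearing vertex acquires degree three, giving $Y_{a,1,1}$, or a tail-free vertex is removed, leaving a path. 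If two or more cycle-vertices are deleted, the surviving cycle-edges induce a path. Since any digraph whose underlying graph is a tree is switching equivalent to that tree (the absence of cycles lets one propagate diagonal phases freely), the tree cases are exactly $Y_{a,1,1}$ and $P_n$, which together with $\square_{a,0,c,0}$ are Theorem~\ref{thm:main}~\ref{i:path}--\ref{i:Y}.

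The step I expect to be most delicate is the last one, and specifically the fact that subdigraphs are vertex-induced: it is precisely this that excludes the $E$-type trees $Y_{2,2,1}$, $Y_{3,2,1}$, $Y_{5,2,1}$, which do embed in a $4$-cycle-with-tails as (non-induced) subtrees but never as induced ones, and which are instead handled through the sporadic graphs $U_1,\dots,U_{11}$ elsewhere. A secondary point needing care is the claim that the canonical digraph is $\square_{h,0,k,0}$ rather than a $D_4$- or $\tilde{C}'_4$-based variant, which hinges on the $4$-cycle of $Q_{hk}$ being unbalanced.
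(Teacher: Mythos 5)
Your proof is correct, but it takes a genuinely different route from the paper's. The paper gets disconnectedness of $\cS(\Delta)$ from Lemma~\ref{lem:asg2} (in $Q_{hk}$ the two degree-$3$ vertices have common neighbours), concludes via Lemma~\ref{lem:1} that $G(\Delta)$ is a subgraph of $G(Q_{hk})$, and then, when $\Delta$ contains a quadrangle, invokes Lemma~\ref{lem:O2k} to see that the quadrangle is switching equivalent to $\tilde{C}_4$, $\tilde{C}'_4$ or $\tilde{C}''_4$, eliminating the last two because their associated signed graphs are octagons, which cannot occur inside (a component equivalent to) a subgraph of $Q_{hk}$; the tree cases are then immediate. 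You instead prove disconnectedness by a covering-space argument (an odd-arc cycle of length $m\ge 3$ lifts to a cycle of length $2m\ge 6$, impossible in the unicyclic girth-$4$ graph $G(Q_{hk})$), and then run the entire reduction through the bipartite machinery: you compute that a canonical digraph of $Q_{hk}$ is switching equivalent to $\square_{h,0,k,0}$ (the single negative edge, i.e.\ the unbalancedness of the $4$-cycle, is exactly what forces $\tilde{C}_4$ rather than a balanced orientation), apply Lemma~\ref{lem:bipartite}, and enumerate the connected induced subdigraphs of $\square_{h,0,k,0}$. This treats $Q_{hk}$ in precisely the way the paper treats $U_1,\dots,U_{11}$ in Lemma~\ref{lem:UDC}, which is arguably more uniform, and it bypasses Lemma~\ref{lem:O2k} (the classification of digraphs on $C_4$) entirely; your disconnectedness argument is also self-contained, whereas the paper only spells out the representative case of the full $Q_{hk}$ with $h,k>0$. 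Your emphasis on subdigraphs being vertex-induced, which is what excludes trees such as $Y_{a,2,1}$, is exactly the right point to stress. Two small glosses, neither a gap: under the canonical construction the pendant paths of $Q_{hk}$ come out alternately oriented rather than directed, so they are only switching equivalent to the tails of $\square_{h,0,k,0}$ (you do claim only switching equivalence, so this is fine); and Lemma~\ref{lem:bipartite} is stated for a canonical digraph $\Delta_0$ of $U$, so one should note in passing that replacing $\Delta_0$ by the switching-equivalent $\square_{h,0,k,0}$ is harmless, since a monomial equivalence carries subdigraphs to subdigraphs up to switching equivalence.
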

\begin{proof}
Lemma~\ref{lem:asg2} shows that $\cS(\Delta)$ is disconnected.
Indeed, for example, for $Q_{hk}$ with $h,k>0$,
there are exactly two vertices of degree $3$, and they have common neighbors.
Thus, by Lemma~\ref{lem:1}, $G(\Delta)$ is a subgraph is $G(Q_{hk})$.
Suppose $\Delta$ contains a quadrangle $Q$.
Then by Lemma~\ref{lem:O2k}, $Q$ is switching equivalent to
$\tilde{C}_4$, $\tilde{C}'_4$ or $\tilde{C}''_4$.
In the latter two cases, by
Lemma~\ref{lem:asg2}, $\cS(Q)$ is connected, and hence an octagon,
a contradiction. Thus $Q$ is switching equivalent to
$\tilde{C}_4$.
This means that $\Delta$ is switching equivalent to $\square_{a,0,c,0}$
for some $a,c\geq0$.

Now we can assume that $G(\Delta)$ is a proper subgraph of $G(Q_{hk})$
not containing a quadrangle. Then $G(\Delta)$ is $Y_{a,1,1}$ or a path,
and hence $\Delta$ is switching equivalent to $Y_{a,1,1}$ or a path.
\end{proof}

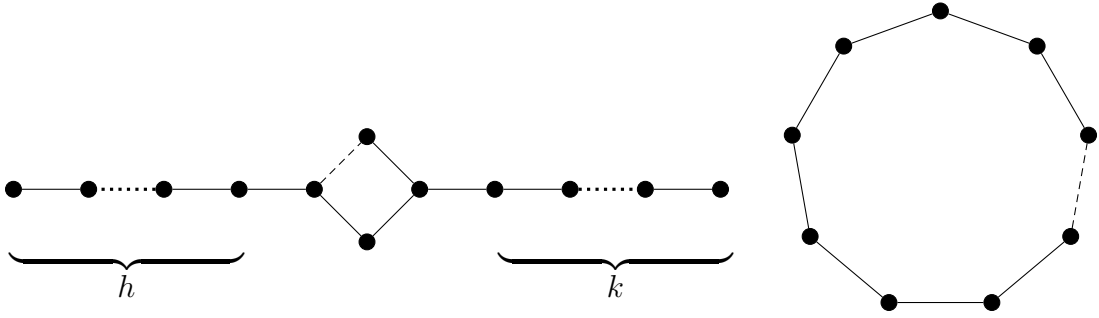
\begin{figure}
\begin{center}
\begin{tikzpicture}[auto, scale=1]
\begin{scope}
\foreach \type/\pos/\name in {{vertex/(0,0)/a0}, {vertex/(1,0)/a1}, {vertex/(2,0)/a2}, {vertex/(3,0)/a3},
{vertex/(4,0)/b0}, {vertex/(4.7,-0.7)/bm}, {vertex/(4.7,0.7)/bp}, {vertex/(5.4,0)/b1},
{vertex/(6.4,0)/c3}, {vertex/(7.4,0)/c2}, {vertex/(8.4,0)/c1}, {vertex/(9.4,0)/c0}}
 \node[\type] (\name) at \pos {};
\foreach \edgetype/\source/\dest in
 {pedge/a0/a1, dedge/a2/a1, pedge/a2/a3, pedge/a3/b0,
  pedge/b0/bm, nedge/b0/bp, pedge/bm/b1, pedge/bp/b1,
  pedge/b1/c3, pedge/c2/c3, dedge/c2/c1, pedge/c0/c1}
 \path[\edgetype] (\source) -- node[weight] {} (\dest);
\foreach \pos/\name in {{(1.5,-1)/\underbrace{\phantom{MMMMMMM}}_{\mbox{$h$}}},
 {(8,-1)/\underbrace{\phantom{MMMMMMM}}_{\mbox{$k$}}}}
 \node at \pos {$\name$};
\end{scope}
\end{tikzpicture}
\quad
\begin{tikzpicture}[auto, scale=1] 
\begin{scope}
\foreach \x/\name in {{10/a1},{50/a5},{90/a9},{130/a13},{170/a17},{210/a21},{250/a25},{290/a29},{330/a33}}
 \node[vertex] (\name) at (\x:2) {};
\foreach \edgetype/\source/\dest in
 {pedge/a1/a5,pedge/a5/a9,pedge/a9/a13,pedge/a13/a17,pedge/a17/a21,pedge/a21/a25,
  pedge/a25/a29,pedge/a29/a33,nedge/a33/a1}
 \path[\edgetype] (\source) -- node[weight] {} (\dest);
\end{scope}
\end{tikzpicture}
\caption{$Q_{hk}$ and $O_{2k}$}
\label{fig:QO}
\end{center}
\end{figure}



\begin{figure}
\begin{center}
\begin{tikzpicture}[auto, scale=0.7]
 \foreach \pos/\name in
  {(45:1)/a1,(45:2.5)/b1,
   (135:1)/a2, (135:2.5)/b2,
   (225:1)/a3, (225:2.5)/b3,
   (315:1)/a4, (315:2.5)/b4}
  \node[vertex] (\name) at \pos {};
 \foreach \edgetype/\source/\dest in
  {pedge/a1/b1, pedge/a2/b2, nedge/a3/b3, pedge/a4/b4,
   pedge/a1/a2, pedge/a2/a3, pedge/a3/a4, nedge/a4/a1,
   nedge/b1/b2, pedge/b2/b3, pedge/b3/b4, pedge/b4/b1}
 \path[\edgetype] (\source) -- (\dest);
 \foreach \pos/\name in {(0,-2.2)/U_{1}}
  \node at \pos {$\name$};
\end{tikzpicture}
\quad
\begin{tikzpicture}[auto, scale=1]
 \foreach \pos/\name in
  {(0,0)/a0,(1,0)/a1,(2,0)/a2,(3,0)/a3,(4,0)/a4,
   (0,1)/b0,(1,1)/b1,(2,1)/b2}
  \node[vertex] (\name) at \pos {};
 \foreach \edgetype/\source/\dest in
  {pedge/a0/a1, pedge/a1/a2, pedge/a2/a3, pedge/a3/a4,
   pedge/b0/b1, pedge/b1/b2,
   pedge/a1/b1, nedge/a2/b2}
 \path[\edgetype] (\source) -- (\dest);
 \foreach \pos/\name in {(2,-1)/U_{2}}
  \node at \pos {$\name$};
\end{tikzpicture}
\quad
\begin{tikzpicture}[auto, scale=1]
 \foreach \pos/\name in
  {(0,0)/a0,(1,0)/a1,(2,0)/a2,(3,0)/a3,(4,0)/a4,(5,0)/a5,
   (1,1)/b1,(2,1)/b2}
  \node[vertex] (\name) at \pos {};
 \foreach \edgetype/\source/\dest in
  {pedge/a0/a1, pedge/a1/a2, pedge/a2/a3, pedge/a3/a4, pedge/a4/a5,
   pedge/b1/b2,
   pedge/a1/b1, nedge/a2/b2}
 \path[\edgetype] (\source) -- (\dest);
 \foreach \pos/\name in {(2.5,-1)/U_{3}}
  \node at \pos {$\name$};
\end{tikzpicture}
\end{center}

\begin{center}
\begin{tikzpicture}[auto, scale=1]
 \foreach \pos/\name in
  {(0,0)/a0,(1,0)/a1,(2,0)/a2,(3,0)/a3,(4,0)/a4,
   (0,1)/b0,(1,1)/b1,(2,1)/b2}
  \node[vertex] (\name) at \pos {};
 \foreach \edgetype/\source/\dest in
  {pedge/a0/a1, pedge/a1/a2, pedge/a2/a3, pedge/a3/a4,
   pedge/b0/b1, pedge/b1/b2,
   pedge/a0/b0, nedge/a1/b1, pedge/a2/b2}
 \path[\edgetype] (\source) -- (\dest);
 \foreach \pos/\name in {{(2,-0.5)/U_4}}
  \node at \pos {$\name$};
\end{tikzpicture}
\quad
\begin{tikzpicture}[auto, scale=1]
 \foreach \pos/\name in
  {(0,0)/a0,(1,0)/a1,(2,0)/a2,(3,0)/a3,(4,0)/a4,(5,0)/a5,(6,0)/a6,
   (2,1)/b2}
  \node[vertex] (\name) at \pos {};
 \foreach \edgetype/\source/\dest in
  {pedge/a0/a1, pedge/a1/a2, pedge/a2/a3, pedge/a3/a4, pedge/a4/a5, pedge/a5/a6,
   pedge/a2/b2}
 \path[\edgetype] (\source) -- (\dest);
 \foreach \pos/\name in {{(3,-0.5)/U_5}}
  \node at \pos {$\name$};
\end{tikzpicture}
\end{center}

\begin{center}
\begin{tikzpicture}[auto, scale=1]
 \foreach \pos/\name in {
  {(1,0)/0}, {(0.5,0.86)/60}, {(-0.5,0.86)/120},
  {(-1,0)/180}, {(-0.5,-0.86)/240},{(0.5,-0.86)/300},
  {(2,0)/x+}, {(-2,0)/x-}}
  \node[vertex] (\name) at \pos {};
 \foreach \edgetype/\source/\dest in {
  pedge/0/60, pedge/60/120, nedge/120/180, pedge/180/240,
  pedge/240/300, pedge/300/0,
  pedge/0/x+, pedge/180/x-}
  \path[\edgetype] (\source) -- (\dest);
 \foreach \pos/\name in {{(0,-1.5)/U_6}}
  \node at \pos {$\name$};
\end{tikzpicture}
\quad
\begin{tikzpicture}[auto, scale=1]
 \foreach \type/\pos/\name in
		{{vertex/(0,1)/90},{vertex/(0,-1)/270},
		{vertex/(0.86,0.5)/30}, {vertex/(0.86,-0.5)/-30},
		{vertex/(-0.86,0.5)/150}, {vertex/(-0.86,-0.5)/210},
		{vertex/(-1.86,0.5)/y+}, {vertex/(-1.86,-0.5)/y-}}
			\node[\type] (\name) at \pos {};
 \foreach \edgetype/\source/ \dest in
		{pedge/-30/30, pedge/30/90, pedge/90/150, nedge/150/210,
		pedge/210/270, pedge/270/-30, pedge/150/y+, pedge/y+/y-, pedge/y-/210}
			\path[\edgetype] (\source) -- (\dest);
\foreach \pos/\name in {{(-0.5,-1.5)/U_7}}
 \node at \pos {$\name$};
\end{tikzpicture}
\quad
\begin{tikzpicture}[auto, scale=1]
 \foreach \pos/\name in {
  (-1,0)/am1, (0,0)/a0, (1,0)/a1, (2,0)/a2,
  (0.5,0.86)/b0, (1.5,0.86)/b1,
  (0.5,-0.86)/c0, (1.5,-0.86)/c1}
  \node[vertex] (\name) at \pos {};
 \foreach \edgetype/\source/\dest in {
  pedge/am1/a0, nedge/a0/b0, pedge/a0/c0, pedge/a1/b0, pedge/a1/c0,
  nedge/a1/a2, pedge/a2/b1, pedge/a2/c1, pedge/b0/b1, pedge/c0/c1}
  \path[\edgetype] (\source) -- (\dest);
 \foreach \pos/\name in {(0.5,-1.5)/U_8}
  \node at \pos {$\name$};
\end{tikzpicture}
\end{center}

\begin{center}
\begin{tikzpicture}[auto, scale=1]
 \foreach \pos/\name in {
  (0,0)/a0, (1,0)/b0, (0,1)/d0, (1,1)/c0,
  (-0.7,-0.7)/a1, (1+0.7,0-0.7)/b1, (0-0.7,1+0.7)/d1, (1+0.7,1+0.7)/c1}
  \node[vertex] (\name) at \pos {};
 \foreach \edgetype/\source/\dest in {
  pedge/a0/b0, nedge/b0/c0, pedge/c0/d0, pedge/d0/a0,
  pedge/a0/a1, pedge/b0/b1, pedge/c0/c1, pedge/d0/d1}
  \path[\edgetype] (\source) -- (\dest);
 \foreach \pos/\name in {(0.5,-1)/U_9}
  \node at \pos {$\name$};
\end{tikzpicture}
\quad
\begin{tikzpicture}[auto, scale=1]
 \foreach \pos/\name in {
  (0,0)/a0, (1,0)/a1, (2,0)/a2, (3,0)/a3,
  (0,1)/b0, (1,1)/b1, (2,1)/b2, (3,1)/b3}
  \node[vertex] (\name) at \pos {};
 \foreach \edgetype/\source/\dest in {
  pedge/a0/a1, pedge/a1/a2, pedge/a2/a3,
  pedge/b0/b1, nedge/b1/b2, pedge/b2/b3,
  nedge/a0/b0, pedge/a1/b1, pedge/a2/b2, nedge/a3/b3}
  \path[\edgetype] (\source) -- (\dest);
 \foreach \pos/\name in {(1.5,-1)/U_{10}}
  \node at \pos {$\name$};
\end{tikzpicture}
\quad
\begin{tikzpicture}[auto, scale=1]
 \foreach \pos/\name in {
  (0,0)/a0, (1,0)/a1, (2,0)/a2, (3,0)/a3,
  (1,1)/b1, (2,1)/b2, (3,1)/b3, (4,1)/b4}
  \node[vertex] (\name) at \pos {};
 \foreach \pos/\name in {(2,-1)/U_{11}}
  \node at \pos {$\name$};
 \foreach \edgetype/\source/\dest in {
  pedge/a0/a1, pedge/a1/a2, pedge/a2/a3,
  pedge/b1/b2, pedge/b2/b3, pedge/b3/b4,
  pedge/a1/b1, nedge/a2/b2, pedge/a3/b3}
  \path[\edgetype] (\source) -- (\dest);
\end{tikzpicture}
\end{center}
\caption{$U_1,\dots,U_{11}$ as in \cite[Fig.~12]{MS}}
\label{fig:F-U}
\end{figure}
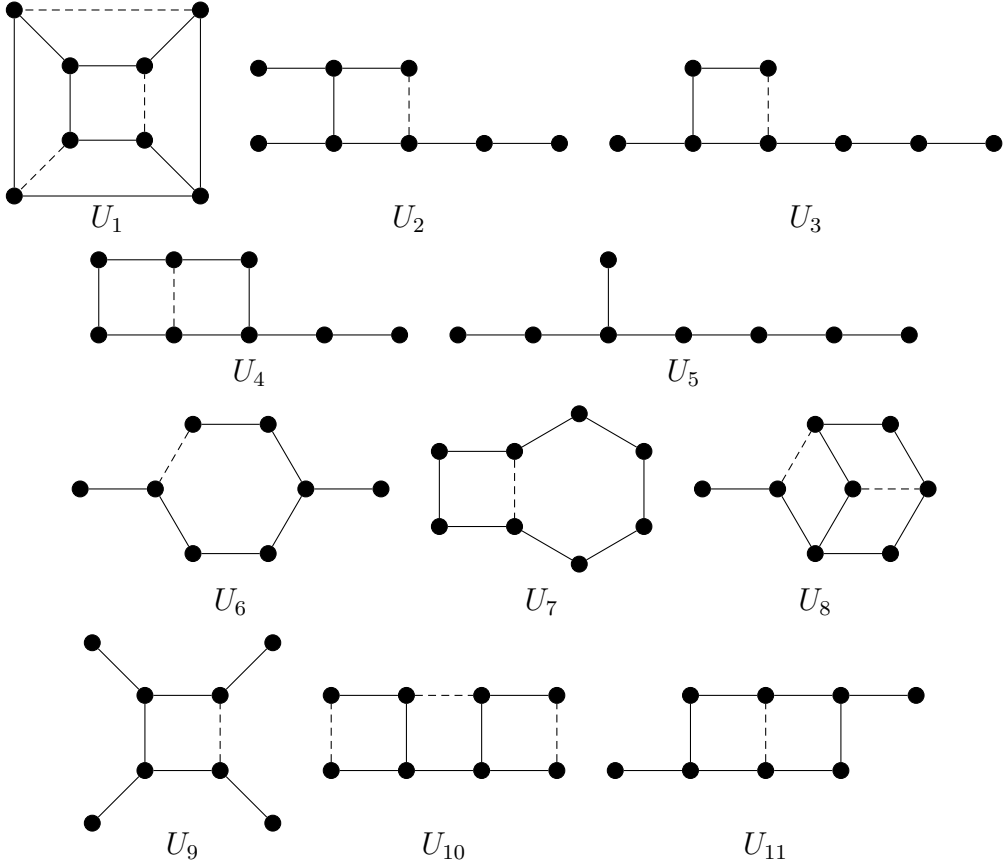


\begin{lemma}\label{lem:UC}
If $\cS(\Delta)$ is connected and equivalent to a subgraph of $U_i$
for some $i$, then $i=1$ or $6$, and $\Delta$ is switching equivalent to
a path of length at most $3$,
$D_3$, $\tilde{C}_3$,
$\tilde{U}_1$ or $\tilde{U}_6$.
\end{lemma}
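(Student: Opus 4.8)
The plan is to turn the hypothesis into a strong bound on the size of $\Delta$ and then classify by hand. Since each $U_i$ has $8$ vertices and equivalence of signed graphs preserves the number of vertices, $\cS(\Delta)$ has at most $8$ vertices; as it has $2|V(\Delta)|$ vertices by construction, we get $|V(\Delta)|\le 4$. On the other hand $\cS(\Delta)$ is connected, so by Lemma~\ref{lem:1} the digraph $\Delta$ contains a cycle with an odd number of arcs; in particular $\Delta$ has a cycle and $|V(\Delta)|\ge 3$. Hence $|V(\Delta)|\in\{3,4\}$ and $\cS(\Delta)$ has $6$ or $8$ vertices. Because a subgraph in the sense of Section~\ref{subsec:2.2} is vertex-induced, the $8$-vertex case forces $\cS(\Delta)$ to be equivalent to a whole $U_i$, while the $6$-vertex case forces it to be equivalent to an induced $6$-vertex subgraph of some $U_i$. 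Throughout I would use Lemma~\ref{lem:asg2}: the map $x_1\mapsto x_2$ is a fixed-point-free involution $\sigma$ of $G(\cS(\Delta))$ that preserves degrees, fixes no vertex, sends no vertex to a neighbour, and sends no vertex to one sharing a common neighbour with it. Inverting the construction of $\cS(\Delta)$, a choice of which vertex of each $\sigma$-orbit lies in $V_1$ recovers a digraph $\Delta$, with the edges joining two orbits read off as a digon or an arc.

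For $|V(\Delta)|=3$ the underlying graph of $\Delta$ must be a triangle, so $\cS(\Delta)$ is connected and $2$-regular, hence a $6$-cycle. Since an induced subgraph of $U_i$ has all eigenvalues in $(-2,2)$, Lemma~\ref{lem:SDspec} rules out the balanced $6$-cycle (spectral radius $2$) and leaves the unbalanced one, which is equivalent to the hexagon $O_6$ occurring as an induced subgraph of $U_6$ (this is the source of the value $i=6$). Reconstructing $\Delta$ from $(\cS(\Delta),\sigma)$ and using Lemma~\ref{lem:4way}, the possible labelings of the orbits give exactly the two switching classes $D_3$ and $\tilde{C}_3$; this already illustrates that a single associated signed graph can produce several non-switching-equivalent digraphs.

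For $|V(\Delta)|=4$ I would run through $U_1,\dots,U_{11}$ and eliminate all but two using only Lemma~\ref{lem:asg2}. A fixed-point-free degree-preserving involution requires every degree class of $G(U_i)$ to have even size; this already discards $U_2,U_4,U_5,U_8$, each of which has an odd number of vertices of degree $1$ (and of degree $3$). For $U_3,U_7,U_9,U_{10},U_{11}$ every degree-respecting fixed-point-free pairing of the maximum-degree vertices either joins two adjacent vertices (violating (i)) or two vertices with a common neighbour (violating (ii)); for instance in $U_3$ and $U_7$ the two vertices of degree $3$ are adjacent, while in $U_9,U_{10},U_{11}$ the only pairing admissible for (i) leaves a common neighbour. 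Only $U_1$ (with the antipodal involution of the cube) and $U_6$ (with the central involution of the hexagon-with-pendants) survive, and reconstructing from these gives digraphs on $K_4$ and on the triangle-with-pendant, which are $\tilde{U}_1$ and $\tilde{U}_6$.

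The main obstacle, and the part needing genuine care, is the constructive converse: one must verify that $U_1$ and $U_6$ really do occur, i.e.\ exhibit $\tilde{U}_1$ and $\tilde{U}_6$ and produce explicit diagonal/switching matrices showing $\cS(\tilde{U}_1)\sim U_1$ and $\cS(\tilde{U}_6)\sim U_6$, in the spirit of Propositions~\ref{prop:T} and \ref{prop:S}; the combinatorial existence of $\sigma$ is necessary but not sufficient, since the signs of $U_i$ must be compatible with a consistent assignment of digons and arcs. One also has to confirm uniqueness of the resulting digraph up to switching equivalence and, conversely, that configurations passing the degree test but not embedding as an induced $U_i$ — most notably the diamond $K_4-e$, whose associated signed graph is the cube with two antipodal edges deleted — yield nothing new: such graphs are absent from Theorem~\ref{thm:McS}, so the corresponding digraphs have spectral radius at least $2$ and are excluded. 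Finally, I would note that the path alternative in the statement does not genuinely arise under the connectedness hypothesis, since by Lemma~\ref{lem:1} a path (having no cycle) has disconnected associated signed graph; it is recorded for compatibility with the surrounding case analysis, the substantive outcomes being $D_3,\tilde{C}_3,\tilde{U}_1,\tilde{U}_6$.
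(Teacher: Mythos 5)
Your proposal follows essentially the same route as the paper's proof: bound the number of vertices by the eight vertices of $U_i$, treat the proper-subgraph case (at most three vertices) by hand, and in the full eight-vertex case impose the necessary conditions of Lemma~\ref{lem:asg2} on the underlying graph to eliminate every $U_i$ except $U_1$ and $U_6$, then reconstruct $\Delta$ as a digraph on $K_4$, resp.\ on the triangle with a pendant edge. Your explicit parity and common-neighbour eliminations check out, your observation that the path alternative is vacuous when $\cS(\Delta)$ is connected is a correct sharpening of the statement, and the sign-compatibility and uniqueness verification that you single out as ``the main obstacle'' is exactly the step the paper itself dismisses as ``routine to check,'' so your proposal is no less complete there than the paper's own proof.

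One side assertion, however, is false: $D_3$ and $\tilde{C}_3$ \emph{are} switching equivalent in the sense of Definition~\ref{dfn:sw}, because four-way switching allows permutation factors as well as diagonal ones. Concretely, label $D_3$ by arcs $(1,2),(2,3),(3,1)$ and $\tilde{C}_3$ by arcs $(1,2),(2,3),(1,3)$; taking $P$ to be the permutation matrix transposing $1$ and $2$ and $D=\operatorname{diag}(1,-1,1)$, one checks that $(DP)H(D_3)(DP)^*=H(\tilde{C}_3)$. Thus the triangle case produces a single switching class (the lemma lists both digraphs only redundantly), and your remark that this ``illustrates that a single associated signed graph can produce several non-switching-equivalent digraphs'' is not justified by this example; the phenomenon the paper alludes to concerns the notation and weaker switching notion of \cite{GM2}. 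Since the lemma's conclusion is a disjunction, this error does not affect the validity of your argument. Likewise, your parenthetical tracing the value $i=6$ to the hexagon case is imprecise --- the unbalanced hexagon also occurs as an induced subgraph of $U_1$ and $U_7$, and the claim ``$i=1$ or $6$'' is really about the case where $\cS(\Delta)$ is equivalent to a full eight-vertex $U_i$ --- but this too is harmless.
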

\begin{proof}
First, if $\cS(\Delta)$ is connected and equivalent to a proper subgraph of $U_i$,
then $\Delta$ has at most three vertices. Then it is routine to check that $\Delta$ is
switching equivalent to a path of length at most $3$,
$D_3$ or $\tilde{C}_3$.

Now assume that $\cS(\Delta)$ is equivalent to $U_i$.
Then the underlying
graph of $U_i$ must satisfy the conditions of Lemma~\ref{lem:asg2}.
The only graphs satisfying the conditions are $U_1,U_6$.

If $\cS(\Delta)$ is equivalent to $U_1$, then the underlying graph
of $\Delta$ is $K_4$. Then it is routine to check that $\Delta$ is
switching equivalent to $\tilde{U}_1$.

If $\cS(\Delta)$ is equivalent to $U_6$, then the underlying graph
of $\Delta$ is the triangle with a pendant edge attached.
Then it is routine to check that $\Delta$ is
switching equivalent to $\tilde{U}_6$.
\end{proof}

\begin{figure}
\begin{center}
\begin{tikzpicture}[auto, scale=1]
 \foreach \pos/\name in {
  (0:1)/a0,(120:1)/a1,(240:1)/a2,(2.5,0)/b}
  \node[vertex] (\name) at \pos {};
 \foreach \edgetype/\source/\dest in {
   wedge/a0/a1, wedge/a1/a2, wedge/a2/a0,
   pedge/a0/b, pedge/a1/b, pedge/a2/b}
 \path[\edgetype] (\source) -- (\dest);
\end{tikzpicture}
\quad
\begin{tikzpicture}[auto, scale=1]
 \foreach \pos/\name in {
  (0:1)/a0,(120:1)/a1,(240:1)/a2,(2.5,0)/b}
  \node[vertex] (\name) at \pos {};
 \foreach \edgetype/\source/\dest in {
   wedge/a0/a1, wedge/a1/a2, wedge/a2/a0,
   pedge/a0/b}
 \path[\edgetype] (\source) -- (\dest);
\end{tikzpicture}
\end{center}
\caption{$\tilde{U}_1$ and $\tilde{U}_6$}
\label{fig:U1U6}
\end{figure}
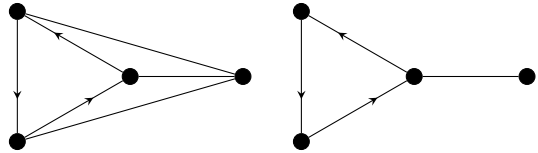

\begin{lemma}\label{lem:UDC}
If $\cS(\Delta)$ is disconnected and a connected component
of $\cS(\Delta)$ is equivalent to a subgraph of $U_i$ for some $i$, then
$\Delta$ is switching equivalent to a subdigraph of a canonical digraph of
$U_i$.
\end{lemma}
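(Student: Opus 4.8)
The plan is to reduce immediately to Lemma~\ref{lem:bipartite}. That lemma asserts that if $U$ is a bipartite signed graph with canonical digraph $\Delta_0$, and $\Delta$ is a digraph whose associated signed graph $\cS(\Delta)$ is disconnected and has a connected component equivalent to some subgraph of $U$, then $\Delta$ is switching equivalent to a subdigraph of $\Delta_0$. This is verbatim the conclusion we seek, provided we are entitled to take $U=U_i$. So the only thing the present lemma adds beyond Lemma~\ref{lem:bipartite} is the specialization of $U$ to one of the eleven sporadic signed graphs.

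The one hypothesis of Lemma~\ref{lem:bipartite} not already granted is that $U_i$ be bipartite; this is also exactly what makes the canonical digraph of $U_i$ well defined in the sense of Definition~\ref{dfn:canonical}. I would therefore first verify that each of the eleven signed graphs $U_1,\dots,U_{11}$ in Fig.~\ref{fig:F-U} has bipartite underlying graph. This is routine: each underlying graph admits an explicit $2$-coloring that can be read off from the figure (for instance, the underlying graph of $U_1$ is the $3$-cube and that of $U_5$ is a tree). Since each $U_i$ is moreover connected, Definition~\ref{dfn:canonical} then applies and a canonical digraph $\Delta_0$ of $U_i$ exists, unique up to switching equivalence.

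With bipartiteness in hand, the proof closes in a single application of Lemma~\ref{lem:bipartite} with $U=U_i$: its hypotheses---that $\cS(\Delta)$ is disconnected and has a connected component equivalent to a subgraph of $U_i$---are precisely the hypotheses of the present statement, so its conclusion yields that $\Delta$ is switching equivalent to a subdigraph of the canonical digraph $\Delta_0$ of $U_i$. I do not expect any genuine obstacle, since all the substantive content has already been packaged into Lemma~\ref{lem:bipartite}; the only labor is the bipartiteness check for the eleven graphs, which is mechanical.
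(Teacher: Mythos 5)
Your proposal is correct and matches the paper's own proof essentially verbatim: the paper likewise observes that all of $U_1,\dots,U_{11}$ are bipartite and then concludes by Lemmas~\ref{lem:4way} and \ref{lem:bipartite}. Your additional remarks (connectedness of the $U_i$ so that Definition~\ref{dfn:canonical} applies, and the explicit bipartiteness check) only make explicit what the paper leaves as an observation.
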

\begin{proof}
Observe that all of the signed graphs $U_1,\dots,U_{11}$ are bipartite.
The result follows from
Lemmas~\ref{lem:4way} and \ref{lem:bipartite}.
\end{proof}

Now we are ready to complete the proof of the second part of
Theorem~\ref{thm:main}.
By Lemma~\ref{lem:SDspec}, the
associated signed graph $\cS(\Delta)$ has spectral radius less than $2$.
By Theorem~\ref{thm:McS}, up to equivalence, each connected component of
$\cS(\Delta)$ is either contained in a maximal
one $O_{2k}$, $U_1,\dots,U_{11}$, or is a subgraph of
$Q_{hk}$ for some $h,k$ with  $h+k\geq4$.
If $\cS(\Delta)$ has a connected component which is equivalent to a subgraph
of $O_{2k}$
(resp.\ $Q_{hk}$),
then Lemma~\ref{lem:O2k}
(resp.\ Lemma~\ref{lem:Qhk})
shows that $\Delta$ is switching equivalent to one of the digraphs
in Theorem~\ref{thm:main}
\ref{i:Dn}--\ref{i:path} (resp.\ \ref{i:path}--\ref{i:Y}).
If $\cS(\Delta)$ is connected and equivalent to a subgraph
of $U_i$,
then Lemma~\ref{lem:UC} shows that
$\Delta$ is switching equivalent to one of the digraphs
in Theorem~\ref{thm:main} \ref{i:Dn}, \ref{i:Cn}, \ref{i:path},
\ref{i:U1} or \ref{i:U6}.
If $\cS(\Delta)$ is disconnected and
has a connected component which is equivalent to a subgraph
of $U_i$, then Lemma~\ref{lem:UDC} shows that
$\Delta$ is switching equivalent to a subdigraph of one of the digraphs
in Theorem~\ref{thm:main} \ref{i:canonical}.

Conversely,  every digraph in \ref{i:Dn}--\ref{i:canonical}
has spectral radius less than $2$ by
Lemma~\ref{lem:SDspec} and Theorem~\ref{thm:McS}.
This completes the proof of Theorem~\ref{thm:main}.

Theorem~\ref{thm:main} and \cite[Theorem~4.15]{GM2}
give the same infinite families of digraphs
$D_n$, $\tilde{C}_n$, $\tilde{C}'_n$,  $\tilde{C}''_n$, $P_n$,
$\square_{a,0,c,0}$ and
$Y_{a,1,1}$. Since Theorem~\ref{thm:main} only claims that
every digraph with spectral radius less than $2$ is a subdigraph of
a digraph listed there, those which are listed in
\cite[Theorem~4.15]{GM2} but are not maximal do not appear
in Theorem~\ref{thm:main}.
Table~\ref{tab:1} gives the list of graphs apart from the infinite families
in Theorem~\ref{thm:main} and \cite[Theorem~4.15]{GM2}.
The first column gives the item labels in \cite[Theorem~4.15]{GM2},
and the second column gives the name of the digraphs in each item,
except item (g) for which the digraph has  no name.
The third column gives the names of digraphs in our notation,
where $\Delta(U)$ denotes a canonical digraph of a bipartite signed
graph $U$.
The last column will be explained in Section~\ref{sec:cr}.
The digraph $\Delta(U_7)$ is missing in \cite{GM2},
giving a counterexample
to the statement of \cite[Lemma~4.8(b)]{GM2}.

\begin{table}[h]
\begin{center}
\begin{tabular}{|c|c|c|c|}
\hline
\multicolumn{2}{|c|}{Notation in \cite{GM2}}& Theorem~\ref{thm:main}
&lattice\\ \hline
(f)& $Y_{4,2,1}$ & $\Delta(U_5)$ & $E_8\otimes\Z[i]$\\
& $Y_{3,2,1}$ & $\subset Y_{4,2,1}$ & $E_7\otimes\Z[i]$ \\ \hline
(g) & --- & $\tilde{U}_6$ & $E_8^{\C}$\\ \hline
(h) & $Y_1$ & $\tilde{U}_1$& $E_8^{\C}$\\ \hline
(j) & $\square_{3,1,0,0}$ & $\Delta(U_3)$& $E_8\otimes\Z[i]$\\
    & $\square_{2,1,1,0}$ & $\Delta(U_2)$& $E_8\otimes\Z[i]$\\
    & $\square_{1,1,1,1}$ & $\Delta(U_9)$& $E_8\otimes\Z[i]$\\ \hline
(k) & $X_1$ & $\subset X_2$ & $E_6\otimes\Z[i]$ \\
    & $X_2$ & $\subset X_3$ & $E_7\otimes\Z[i]$ \\
    & $X_3$ & $\Delta(U_{11})$ & $E_8\otimes\Z[i]$\\
    & $X_4$ & $\Delta(U_4)$ & $E_8\otimes\Z[i]$\\
    & $X_5$ & $\subset X_6$ & $E_7\otimes\Z[i]$ \\
    & $X_6$ & $\Delta(U_8)$ & $E_8\otimes\Z[i]$\\
    & $X_7$ & $\Delta(U_{10})$ & $E_8\otimes\Z[i]$\\
    & $X_8$ & $\Delta(U_1)$ & $E_8\otimes\Z[i]$\\ \hline
(l) & $X_9$ & $\subset X_{10}$ & $E_7\otimes\Z[i]$\\
    & $X_{10}$ & $\Delta(U_6)$ & $E_8\otimes\Z[i]$\\ \hline
    &  & $\Delta(U_7)$ & $E_8\otimes\Z[i]$\\ \hline
\end{tabular}
\end{center}
\caption{Comparison of Theorem~\ref{thm:main} and \cite[Theorem~4.15]{GM2}}
\label{tab:1}
\end{table}

\section{Concluding remarks}\label{sec:cr}

By a $\Z$-\textbf{lattice} (or simply, a \textbf{lattice}),
we mean a free $\Z$-module equipped with a positive definite
symmetric bilinear form.
A $\Z[i]$-\textbf{lattice}, which we call a
\textbf{Gaussian lattice}, can be defined in an
analogous manner using a Hermitian form instead of
a symmetric bilinear form.
Since the real part of a positive definite Hermitian form is
a positive definite symmetric bilinear form,
a Gaussian lattice $\Lambda$
is also a $\Z$-lattice by regarding $\Lambda$ as a
$\Z$-module
Conversely, if $L$ is a lattice, then one can equip a
positive definite Hermitian form on $L\otimes\Z[i]$,
making it a Gaussian lattice. Not every Gaussian lattice
is obtained in this way.

A (Gaussian) lattice is called a (Gaussian)
\textbf{root lattice} if it is generated by its set of vectors of
squared norm $2$.
A (Gaussian) lattice is said to be \textbf{irreducible} if it is not
an orthogonal direct sum of proper sublattices.
Note that a (Gaussian) root lattice is irreducible if and only if
the non-orthogonality graph on the set of its roots is connected.
Using the classification of irreducible root lattices \cite{Ebeling},
every irreducible Gaussian root lattice $\Lambda$
is either $L\otimes\Z[i]$ for
some irreducible root lattice $L$, or $\Lambda$
is irreducible as a $\Z$-lattice.
The latter possibilities are classified by \cite[Lemma~3.1]{KM}:
\begin{enumerate}
\item\label{irred1} The root lattice of type $\mathsf{D}_{2n}$
may be regarded as a Gaussian lattice of rank $n$
\[\mathsf{D}_{2n}^{\C}=
\{\sum_{j=1}^n a_je_j\mid a_j\in\Z[i],\;\sum_{j=1}^n a_j\in
(1+i)\Z[i]\},\]
where $e_1,\dots,e_n$ are the standard orthonormal basis.
\item\label{irred2} The root lattice of type $\mathsf{E}_8$
may be regarded as a
Gaussian lattice $\mathsf{E}_8^{\C}$ of rank $4$
(see \cite[p.~373]{I}).
\end{enumerate}

Every connected
digraph with spectral radius at most $2$ gives rise to an
irreducible Gaussian
root lattice. In fact, if $H$ is the Hermitian adjacency matrix of
a digraph and $H$ has spectral radius at most $2$, then $2I-H$
is positive semidefinite. This implies that there is a Gaussian lattice
$\Lambda$ generated by a set $X$ of vectors
of squared norm $2$ in $\Lambda$ such that
$-H$ is the displaced Gram matrix of $X$.
Thus, the lattice $\Lambda$ is a Gaussian root lattice.
In particular,
every maximal digraph with  spectral radius $2$ in Theorem~\ref{thm:main}
generates one of the Gaussian
lattices \ref{irred1}, \ref{irred2} or
$L\otimes\Z[i]$ for some irreducible root lattice $L$.
Also, every digraph with  spectral radius less than $2$
gives a basis of a Gaussian root lattice.
Table~\ref{tab:root1}
(resp.\ the last column of Table~\ref{tab:1})
shows the correspondences between the
maximal digraphs with spectral radius exactly $2$
(resp.\ less than $2$) and
the Gaussian root lattices.
It is worth mentioning that the associated signed graph
$\cS(\Delta)$ of a digraph $\Delta$ with spectral radius at most $2$
is connected if and only if the corresponding Gaussian root
lattice $\Lambda$ is irreducible as a $\Z$-lattice.
Indeed, $X$ is a subset of $\Lambda$ whose displaced Gram
matrix is $-H(\Delta)$, then the real part of
the displaced Gram matrix of $X\cup iX$ is the adjacency matrix
of $\cS(\Delta)$. Thus,
$\Lambda$ is irreducible as a $\Z$-lattice if and only if
$\cS(\Delta)$ is connected.

\begin{table}[h]
\begin{tabular}{|c|c|c|}
\hline
$\Delta_{2k}^{(1)}$&$T_{2k}$&$D_k\otimes\Z[i]$\\
$\Delta_{2k}^{(i)}$&$T_{2k}^{(i)}$&$D_{2k}^{\mathbb{C}}$\\
$\Delta_{14}$&$S_{14}$&$E_7\otimes\Z[i]$\\
$\Delta_{16}$&$S_{16}$&$E_8\otimes\Z[i]$\\
$\Delta_{8}^\dagger$&$S_8^\dagger$&$E_8^{\mathbb{C}}$\\
\hline
\end{tabular}
\caption{Gaussian root lattices}
\label{tab:root1}
\end{table}


As is well known, a connected simple graph with smallest eigenvalue at least $-1$
is a complete graph.
In fact, there are no graphs with smallest eigenvalue in $(-\sqrt{2},-1)$.
Guo and Mohar \cite[Proposition~8.6]{GM} determined digraphs
with the same spectrum as a complete graph. It turns out that
digraphs with the same spectrum as a complete graph are switching
equivalent to a complete graph.
We strengthen \cite[Proposition~8.6]{GM} by showing
that a digraph with smallest eigenvalue greater than $-\sqrt{2}$ is
switching equivalent to a complete graph.

For a Hermitian matrix $H$,
we denote by $\lambda_{\min}(H)$ the smallest
eigenvalue of $H$, and
for a digraph $\Delta$, we write $\lambda_{\min}(\Delta)
=\lambda_{\min}(H(\Delta))$.

\begin{proposition}\label{prop:-sqrt2}
Let $\Delta$ be a connected digraph with $\lambda_{\min}(\Delta)>-\sqrt{2}$.
Then $\Delta$ is switching equivalent to a complete graph.
\end{proposition}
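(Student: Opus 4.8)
The plan is to combine Cauchy interlacing with two explicit three-vertex computations and then finish with a single switching. First I would use that the Hermitian adjacency matrix of any subdigraph is a principal submatrix of $H(\Delta)$, so that $\lambda_{\min}(\Delta')\ge\lambda_{\min}(\Delta)>-\sqrt2$ for every subdigraph $\Delta'$. A direct computation shows that any subdigraph whose underlying graph is the path $P_3$ has Hermitian adjacency matrix $\left(\begin{smallmatrix}0&a&0\\\bar a&0&b\\0&\bar b&0\end{smallmatrix}\right)$ with $|a|=|b|=1$, hence characteristic polynomial $\lambda(\lambda^2-2)$ independently of $a,b$, and therefore smallest eigenvalue $-\sqrt2$. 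Consequently $\Delta$ contains no induced $P_3$, so its underlying graph $G(\Delta)$ is $P_3$-free; since a connected $P_3$-free graph is complete, $G(\Delta)=K_n$ for some $n$.

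Next I would analyse triangles. Because $G(\Delta)=K_n$, every $3$-subset of vertices induces a subdigraph whose underlying graph is a triangle, with Hermitian adjacency matrix $\left(\begin{smallmatrix}0&a&b\\\bar a&0&c\\\bar b&\bar c&0\end{smallmatrix}\right)$ and characteristic polynomial $\lambda^3-3\lambda-2\operatorname{Re}(a\bar b c)$, where $a,b,c\in\{1,i,-i\}$. The quantity $a\bar b c$, the gain of the triangle, lies in $\{1,i,-i\}$; the value $-1$ cannot occur since each edge carries a value in $\{1,i,-i\}$. When the gain is $1$ the spectrum is $\{2,-1,-1\}$, while when it is $\pm i$ the spectrum is $\{0,\pm\sqrt3\}$, with smallest eigenvalue $-\sqrt3<-\sqrt2$. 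Interlacing therefore forces every triangle of $\Delta$ to have gain $1$.

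Finally I would switch. Fixing a vertex $v_1$, I take the diagonal unitary $Q=\operatorname{diag}(u_v)$ with $u_{v_1}=1$ and $u_{v_j}=H_{v_1v_j}$ for $j\ne 1$; this four-way switching produces a digraph $\Delta'$ with $H'_{v_1v_j}=1$ for all $j$, that is, every edge at $v_1$ becomes a digon. For any other edge $\{v_i,v_j\}$ the triangle on $\{v_1,v_i,v_j\}$ in $\Delta'$ now has two digon edges, so its gain equals $H'_{v_iv_j}$. Since switching preserves the spectrum, $\Delta'$ still satisfies $\lambda_{\min}(\Delta')>-\sqrt2$, and the previous paragraph gives gain $1$, whence $H'_{v_iv_j}=1$. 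Thus after switching every edge is a digon, $H'=J-I$, and $\Delta$ is switching equivalent to $K_n$.

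The two eigenvalue computations are routine; the step requiring care is the last one, namely propagating the local triangle constraint globally rather than checking it cycle by cycle. The device of fixing a vertex and normalising its star to digons handles this cleanly, because the only available edge values are $\{1,i,-i\}$: the gain $-1$ never arises, and the gains $\pm i$ are precisely what the eigenvalue bound excludes, leaving gain $1$ and hence a digon on every remaining edge.
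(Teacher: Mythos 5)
Your overall plan (interlacing, forbidden $3$-vertex subdigraphs, then a diagonal switching to normalize) is viable and genuinely different from the paper's proof, but as written it contains a real error in the triangle analysis. The claim that the gain $-1$ ``cannot occur since each edge carries a value in $\{1,i,-i\}$'' is false: products of three elements of $\{1,i,-i\}$ do realize $-1$. Concretely, take arcs $(v_1,v_2)$ and $(v_2,v_3)$ together with a digon $\{v_1,v_3\}$; then the gain is $H_{v_1v_2}H_{v_2v_3}H_{v_3v_1}=i\cdot i\cdot 1=-1$, so your case analysis misses a case that actually occurs among digraphs. The omission is fortunately harmless, because this case is also excluded by your own eigenvalue bound: gain $-1$ gives characteristic polynomial $\lambda^3-3\lambda+2=(\lambda-1)^2(\lambda+2)$, hence smallest eigenvalue $-2<-\sqrt{2}$. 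Note that the gap bites a second time in your final paragraph: after normalizing the star of $v_1$ to digons, the entries $H'_{v_iv_j}$ of $QHQ^*$ are a priori arbitrary units in $\{\pm1,\pm i\}$ (you do not yet know $\Delta'$ is a digraph), so there the value $-1$ can again only be ruled out by the eigenvalue computation, not by fiat; alternatively, since gains are invariant under diagonal conjugation, apply the (corrected) triangle analysis to $\Delta$ itself and conclude $H'_{v_iv_j}=\text{gain}=1$ directly.

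Once this case is added, your proof is correct, and it is quite different from the paper's. The paper passes to the associated signed graph $\cS(\Delta)$, whose spectrum doubles that of $\Delta$ by Lemma~\ref{lem:SDspec}, invokes the classification of connected signed graphs with smallest eigenvalue greater than $-\sqrt{2}$ from \cite[Prop.~4.7]{GMSS}, and then uses Lemma~\ref{lem:asg2}~(i) and Lemmas~\ref{lem:1}, \ref{lem:4way} to transfer ``complete'' back to $\Delta$. Your argument is self-contained and more elementary (two $3\times3$ computations plus interlacing and an explicit switching matrix), at the cost of having to do the gauge-normalization by hand; the paper's argument is shorter but leans on an external classification result for signed graphs.
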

\begin{proof}
By Lemma~\ref{lem:SDspec}, $\lambda_{\min}(A(\cS(\Delta)))>-\sqrt{2}$.
Then each connected component of $\cS(\Delta)$ is
switching equivalent to a complete graph by \cite[Prop.~4.7]{GMSS}.
Since $\cS(\Delta)$ cannot be complete by Lemma~\ref{lem:asg2}~(i),
$\cS(\Delta)$ is disconnected. Now Lemma~\ref{lem:1} implies that
$H(\Delta)$ is strongly equivalent to the adjacency matrix
of a complete graph. The result then follows from Lemma~\ref{lem:4way}.
\end{proof}

\subsection*{Acknowledgements}
We thank Gary Greaves for letting us copy and modify the diagrams in
\cite{G}.



\end{document}